\numberwithin{equation}{section}
\newcommand{\p}{\partial}
\renewcommand{\O}{\Omega}
\renewcommand{\div}{\operatorname{div}}
\newtheorem{theorem}{{\bf Theorem}}[section]
\newtheorem{corollary}[theorem]{{\bf Corollary}}
\newtheorem{lemma}[theorem]{{\bf Lemma}}
\newtheorem{prop}[theorem]{{\bf Proposition}}
\newcommand{\tr}{\operatorname{tr}}
\newcommand{\weight}[1]{\langle #1\rangle}
\newcommand{\R}{\mathbb{R}}
\renewcommand{\S}{\mathbb{S}}
\renewcommand{\bar}{\overline}
\renewcommand{\tilde}{\widetilde}
\def\({\left(}
\def\){\right)}
\def\l|{\left|}
\def\r|{\right|}
\newcommand{\diverg}{\operatorname{div}}
\newcommand{\barg}[1]{\bigl(#1\bigr)}
\newcommand{\bset}[1]{\bigl\{#1\bigr\}}
\newcommand{\bnorm}[1]{\bigl\|#1\bigr\|}
\newcommand{\bbar}[1]{\bigl|#1\bigr|}
\newcommand{\scp}[2]{\langle #1, #2\rangle}
\newcommand{\bscp}[2]{\bigl< #1, #2\bigr>}
\newcommand{\dv}[1]{\,{\rm d}#1}
\newcommand{\bref}[1]{(\ref{#1})}
\def\BXzeroR{{B}_{X_0}(0,R)}
\def\BXzeroRone{{B}_{X_0}(0,R_1)}
\def\VWloka{\mathbb{V}}  
\def\HWloka{\mathbb{H}}  
\def\SonetwoWloka{H^{1}}
\def\HNSO{L^2_\sigma(\Omega)}
\def\VNSO{H^1_{0,\sigma}(\Omega)}
\def\VNSprimeO{H^{-1}_{\sigma}(\Omega)}
\def\HNSObb{(\HNSO)}
\def\Zspace{\mathscr{I}}
\def\uh{u_h}
\def\uhi{u_{hi}}
\def\uhone{u_{h1}}
\def\uhtwo{u_{h2}}
\def\Qh{Q_h}
\def\Qhi{Q_{hi}}
\def\Qhone{Q_{h1}}
\def\Qhtwo{Q_{h2}}
\newcommand{\ud}{\mathrm{d}}
\newcommand{\pa}{\partial}
\def\ve{\varepsilon}
\newcommand{\dt}[1]{\partial_t #1} 
\begin{document}

\title[Wellposedness of $Q$-tensor system]{On the initial boundary value problem of a Navier-Stokes/$Q$-tensor
model for liquid crystals  }

\author{Yuning Liu }
\address{Joint Mathematics Institute,   New York University Shanghai, 200122, Shanghai, P. R. China}
\email{yl67@nyu.edu}

\author{Wei Wang}
\address{School of Mathematical Sciences, Zhejiang University, 310027, Hangzhou, P. R. China}
\email{wangw07@zju.edu.cn}

\begin{abstract}
{This work is concerned with the solvability of a Navier-Stokes/$Q$-tensor coupled system modeling the nematic liquid crystal flow on a bounded domain in three dimensional Euclidian space with strong anchoring boundary condition  for the order parameter. We prove the existence of local in time strong solution to the system with the anisotropic elastic energy. The proof is based on mainly two ingredients: first,  we show that the Euler-Lagrange operator corresponding to the Landau-de Gennes free energy with general elastic coefficients fulfills the strong Legendre condition. This result together with a higher order energy estimate leads to the well-posedness of the linearized system, and then a local in time solution of the original system which is regular in temporal variable follows via a fixed point argument. Secondly, the hydrodynamic part of the coupled system can be reformulated into a quasi-stationary Stokes type equation to which the regularity theory of the generalized Stokes system, and then a bootstrap argument can be applied to enhance the spatial regularity of the local in time solution.
}
\end{abstract}

 \maketitle





\section{Introduction}\label{sec:intro}

Nematic liquid crystal is a sort of material which may flow as a conventional liquid while the molecules are oriented in a crystal-like way.
One of the successful continuum theories modeling nematic liquid crystals is the $Q$-tensor theory,   also referred to as Landau-de Gennes theory, which  uses a $3\times 3$ traceless and symmetric matrix-valued function $Q(x)$ as order parameters to characterize the orientation of molecules near material point $x$ (cf. \cite{DeGennesProst1995}).
The matrix $Q$, also called $Q$-tensor, can be interpreted as the second momentum of a number density function
\begin{equation*}
Q(x)=\int_{\mathbb{S}^2} (m\otimes m-\frac13I_3)f(x,m)\ud m,
\end{equation*}
where $f(x, m)$   corresponds to the number density of liquid crystal molecules which orient along the direction $m$ near material point $x$.
The configuration space for $Q$-tensor will be denoted by
\begin{equation}\label{eq:1.08}
  \mathcal{Q}=\{Q\in \R^{3\times 3}\mid Q_{ij}=Q_{ji},~\sum_{i=1}^3Q_{ii}=0\}.
\end{equation}
 If   $Q(x)$ has three equal eigenvalues, it must be zero and this corresponds to the {\it isotropic phase}. When $Q(x)$ has two equal eigenvalues, it can be written as
\begin{equation*}
Q(x)=s(x)\(n(x)\otimes n(x)-\frac 13I_3\),\end{equation*}
for some $n(x)\in \mathbb{S}^2$ and $s(x)\in\mathbb{R}$
and it is said to be    {\it uniaxial}. If all three eigenvalues of $Q(x)$ are  distinct, it is called  {\it biaxial} and  can be written as
\begin{equation*}
Q(x)=s(x)\(n(x)\otimes n(x)-\frac 13I_3\)+r(x)\(m(x)\otimes m(x)-\frac 13I_3\),\end{equation*}
where $n(x),m(x)\in \S^2$  and $s(x),r(x)\in\R$.

The classic Landau-de Gennes theory associates  to each $Q(x)$ a   free energy  of the following form:
\begin{align}\nonumber
  \mathcal{F}( Q ,\nabla Q )=&\int_{\Omega}\( \frac{a}2\tr Q ^2
-\frac{ b}{3}\tr Q ^3+\frac{c}{4}(\mathrm{tr} Q ^2)^2 \)\ud x\\ \nonumber
&+\frac{1}{2}\int_{\Omega} \Big(L_1|\nabla Q |^2+L_2Q_{ij,j}Q_{ik,k}
+L_3Q_{ij,k}Q_{ik,j}+L_4Q_{\ell k}Q_{ij,k}Q_{ij,\ell} \Big)   \ud x\\ \label{eq:1.07}
=: &~ \mathcal{F}_b( Q )+\mathcal{F}_e(\nabla Q,Q).
\end{align}
Here and in the sequel,  we shall adopt the Einstein's summation convention by summing over  repeated greek letters.
In \eqref{eq:1.07},  $ \mathcal{F}_b(Q)$ is the bulk energy,   describing the isotropic-nematic phase
transition while   $\mathcal{F}_e(\nabla Q,Q )$
is the elastic energy which characterizes the distortion effect. The parameters $a, b,c $ are temperature dependent constants
  with $b, c >0$, and $L_1, L_2, L_3, L_4$ are  elastic coefficients.
In the sequel, we shall call $\mathcal{F}(Q,\nabla Q)$  {\it{ isotropic}} if $L_2=L_3=L_4=0$ and
  {\it anisotropic} if at least one of  $L_2, L_3, L_4$ does not vanish. This work is devoted to   the later case.
  Note that the term in \eqref{eq:1.07} corresponding to $L_4$ is  cubic   and will lead to severe  analytic difficulties:
  it is shown in \cite{maiersaupe} that $\mathcal{F}(Q,\nabla Q)$ with $L_4\neq 0$ is not bounded from below.
  In this work,  we follow \cite{MR2960033,wangzz} and  assume:
\begin{equation}\label{eq:2.01}
L_1>0, \quad    L_1+L_2+L_3=: L_0>0, \quad L_4=0.
\end{equation}

In order to introduce the sytem under consideration, we need some notation.
For any $Q\in\mathcal{Q}$ defined by \eqref{eq:1.08},   $\mathcal{S}_Q(M)$ will be a linear  operator acting on   any  $3\times 3$  matrix $M$
\begin{equation}\label{eq:1.15}
  \mathcal{S}_ Q ( M)=\xi\Big( \frac12(M+M^T)\cdot( Q +\frac13I_3)+\frac12( Q +\frac13I_3)\cdot(M+ M^T)-2( Q +\frac13I_3) (Q+\frac13I_3) : M\Big).
\end{equation}
Note that $\mathcal{S}_ Q ( M)$ is a traceless symmetric   matrix and if $M$ is symmetric and traceless additionally, it reduces to
\begin{equation*}
  \mathcal{S}_ Q ( M)=\xi\Big( M\cdot( Q +\frac13I_3)+( Q +\frac13I_3)\cdot M-2( Q +\frac13I_3) (Q : M)\Big).
\end{equation*}
Here $A:B=\tr AB^T$ and $A\cdot B$ denotes the usual matrix product of $A,B$ and the `dot' will be sometimes omitted if it is clear from the context.
The parameter  $\xi$  is a constant depending on the molecular details of a given liquid crystal and measures the ratio between the tumbling and the aligning effect that a shear flow would exert over the liquid crystal directors. Concerning the hydrodynamic part,  for any vector field $u$, its gradient  $\nabla u$ can  be written  as the sum of the symmetric and anti-symmetric parts:
\begin{equation*}
  \nabla u=D(u)+W(u),
\end{equation*}
where
 \begin{equation*}
D(u)=\frac12(\nabla u+(\nabla u)^T),\quad W(u)=\frac12(\nabla u-(\nabla u)^T).
\end{equation*}
With these   notation,  the Navier-Stokes/Q-tensor  system,   proposed in Beris-Edwards \cite{BerisEdwards1994}, can be written as:
\begin{subequations}\label{yuning:be}
\begin{align}
  &u_t+u\cdot\nabla u=\nabla P+\nabla\cdot(\sigma^{s}+\sigma^{a}+\sigma^d),\label{eq:BE-v}\quad \\
&\nabla\cdot u=0, \\
&Q_t+u\cdot\nabla  Q  + Q \cdot W(u)-W(u)\cdot Q ={\Gamma} \mathcal{H}(Q)  +\mathcal{S}_Q (D(u)).\label{eq:BE-Q}
\end{align}
\end{subequations}
In \eqref{eq:BE-Q}, $\Gamma$ is the rotational diffusion constant and without loss of generality, we shall assume $\Gamma=1$ in the sequel.
The unknowns $(u,P)$ correspond to the  velocity/pressure of the hydrodynamics respectively. The stress  terms
$\sigma^{s}$, $\sigma^{a}$ and $\sigma^d$ on the right hand side of \eqref{eq:BE-v} are
symmetric viscous stress, anti-symmetric viscous stress and distortion stress respectively:
\begin{subequations}
  \begin{align}
    \sigma^{s}(u,Q)&:= \nu D(u)-\mathcal{S}_Q( \mathcal{H}(Q) ),\\
    \sigma^{a}(Q)&:= Q \cdot \mathcal{H}(Q) - \mathcal{H}(Q) \cdot Q ,\\
\sigma^d(Q)&:= \left(\sigma^d_{ij}(Q)\right)_{1\leq i,j\leq 3}=-\left(\frac{\partial \mathcal{F}(Q,\nabla Q)}{\partial Q_{k\ell,j}}Q_{k\ell,i}\right)_{1\leq i,j\leq 3}.
  \end{align}
\end{subequations}
$ \mathcal{H}(Q)$ is the molecular field, defined as the variational derivative of \eqref{eq:1.07} and is written as the sum of the bulk part   and the elastic part:
\begin{equation}\label{eq:1.11}
    \mathcal{H}(Q):= -\frac{\delta \mathcal{F}(Q,\nabla Q)}{\delta Q}:=-\mathcal{L}(Q)-\mathcal{J}(Q).
\end{equation}
The operator $\mathcal{L}$ and $\mathcal{J}$ can be written explicitly by
\begin{equation}\label{qequ}
\begin{split}
 -\mathcal{L}_{ij}(Q)
=& L_1\Delta{Q_{ij}} +\frac{L_2+L_3}2(Q_{ik,kj}+Q_{jk,ki}
-\frac 23Q_{\ell k,k\ell}\delta_i^j), \end{split}
\end{equation}
\begin{equation*}
-\mathcal{J}_{ij}(Q)= -aQ_{ij}+b(Q_{jk}Q_{ki}-\frac 13\tr(Q^2)\delta_{ij})-c
\tr(Q^2)Q_{ij} .
\end{equation*}
We note that the operator defined via \eqref{qequ} can not be considered as a perturbation of $L_1\Delta$ as we only assumes \eqref{eq:2.01}. Actually, one of the key results in this work is Lemma \ref{lem:estimate-L} below, showing that  \eqref{qequ} fulfills the strong Legendre condition.

The coupled  system \eqref{yuning:be} has been recently studied by several authors.
For the case $\xi = 0$,  which corresponds to the situation when  the molecules only tumble in a shear flow  but   are not aligned by the flow,
the existence of global weak
solutions to the Cauchy problem in $\R^d$ with $d = 2, 3$ is proved  in \cite{PZ2}. Moreover,  solutions with higher order  regularity
and the weak-strong uniqueness for $d = 2$ is discussed.
Later, these results are generalized in \cite{PZ1} to the case when $|\xi|$ is sufficiently small.
Large time behavior of the solution to the Cauchy problem in
$\R^3$ with $\xi=0$ is recently discussed in \cite{dai2014asymptotic}. The global well-posedness and long-time behavior
of system with nonzero $\xi$ in the two-dimensional periodic setting are studied in \cite{CRWX}.

In \cite{HD}, the authors considered  Beris-Edwards system with anisotropic elastic energy    \eqref{eq:1.07} (with $L_2+L_3>0$ and $L_4=0$).
They proved  the  existence of global weak solutions as well as the existence
of a unique global strong solution for the Cauchy problem in $\R^3$ provided that the fluid viscosity is sufficiently large.
In  \cite{ChenXu2016,IyerXuZarnescu2015}, the  weak solution of  the  gradient
flow generated by the general Landau-de Gennes energy \eqref{eq:1.07} with $L_4\neq 0$ is established for small initial data.

Some recent progresses have also been made on the  analysis of certain modified
versions of Beris-Edwards system. In \cite{wilkinson2015strictly}, when $\xi=0$ and  the polynomial bulk energy is replaced by a singular potential derived from molecular Maier-Saupe theory, the author proved, under periodic boundary conditions,  the existence of global weak solutions in space dimension two and three. Moreover,  the existence and uniqueness of global regular solutions for dimension two is obtained.
In \cite{FeireislEtAlQTensor,feireisl2015nonisothermal}, the authors derived a nonisothermal
variants of  \eqref{yuning:be}    and   proved the existence
of global weak solutions in the case of a singular potential under periodic boundary conditions
for general $\xi$.
We also mention  that a rigorous derivation of the general Ericksen-Leslie system from the
small elastic limit of Beris-Edwards system (with arbitrary $\xi$)  is recently given
in \cite{wangzz}  using the Hilbert expansion.

In the aforementioned works, the domain under consideration is either the whole space or the tori. The initial-boundary value
problems of  \eqref{yuning:be}  have been also investigated by several authors, see for instance
 \cite{AbelsDolzmannLiu2014,abels2016strong,guillen2014weak,guillen2015weak},
in which the existences of weak solutions has been studied. In addition, in \cite{AbelsDolzmannLiu2014,guillen2014weak},
the authors proved the existence of local in time solution with higher order time regularity for \eqref{yuning:be} through different approaches. However,
the higher order spacial regularity is not obtained due to the lack of effective energy estimate in the presence of inhomogenous boundary condition for $Q$.

The main goal of the presented work is to improve the results in \cite{AbelsDolzmannLiu2014,guillen2014weak} to nature regularity in space variable.
This gives a full answer to the construction of local in time strong solution of \eqref{yuning:be} in the presence of inhomogenous boundary condition for $Q$.

We shall consider the initial-boundary conditions
\begin{subequations}\label{yuning:BE-ini-bou}
\begin{align}\label{eq:BE-ini}
&(u,Q)|_{t=0}=(u_0,Q_0),\\
&(u,Q)|_{\p \Omega}=(0,Q_0|_{\p \Omega}),\label{eq:BE-bou}
\end{align}
\end{subequations}
where $Q_0=Q_0(x)$ is   time-independent.
Note that such a result requires a   compatibility condition on the initial data
 $Q_0$. To see that, we write \eqref{yuning:be}  in an
abstract form
 \begin{equation}\label{stanon}
  \frac {d}{dt} (u,Q) =\mathcal{E}(u,Q),
 \end{equation}
where $\mathcal{E}:\VNSO\times H^2(\Omega)\mapsto \VNSprimeO\times L^2(\Omega)$ is defined by
 \begin{equation*}
   \begin{split}
&\bscp{\mathcal{E}(u,Q)}{(\varphi,\Psi)}
=-\int_{\Omega} (-u\otimes u+ \sigma^{s}+\sigma^{a}+\sigma^d):\nabla\varphi\, {\rm d}x
 \\
&+ \int_{\Omega}(-u\cdot\nabla  Q  - Q \cdot W(u)+W(u)\cdot Q + \mathcal{H}(Q)  +\mathcal{S}_Q (D(u))): \Psi\, {\rm d}x,
   \end{split}
 \end{equation*}
for all $(\varphi,\Psi)\in \VNSO\times L^2(\Omega;\mathcal{Q})$. Note that the   functional spaces used here are defined in Section \ref{sec:Notation}.
Since \eqref{eq:BE-bou} specifies a time-independent  boundary condition,
it follows that $\dt{Q}|_{\p \Omega}=0$ which leads to the compatibility
condition that the trace of the second component
on the right-hand side of \eqref{stanon} vanishes on $\p\O$.
This motives to define the admissible class for the initial data
\begin{equation}\label{init}
\Zspace =\bset{  (u_0,Q_0)\in \VNSO\times H^2(\Omega;\mathcal{Q})\mid \mathcal{E}(u_0,Q_0)
\in L^2_{\sigma}(\Omega)\times H^1_{0}(\Omega)
}.
\end{equation}
 It is not hard to see $\Zspace$ is not empty. For example, for any $Q$ solving $\mathcal{H}(Q)=0$ and any $u\in H^2_0(\O)$, we have $(u,Q)\in \Zspace$.
We note that such a compatibility condition is nature in the sense that it can not be disregarded by changing the function spaces   unless one considers very weak solution.

The main result of this paper  can be  stated as follows.
\begin{theorem} \label{thm:localstrong}
Assume the coefficients of elastic energy satisfy (\ref{eq:2.01}). Then for any  $(u_0,Q_0)\in \Zspace$ with  $Q_0|_{\p\O}\in H^{5/2}(\p\O) $, there exists some $T>0$ such that
 the system  \eqref{yuning:be} and \eqref{yuning:BE-ini-bou} has a unique solution \begin{equation}\label{eq:1.27}
\begin{split}
u&\, \in H^2(0,T;H^{-1}(\O))\cap H^1(0,T;H_\sigma^1(\Omega))\cap L^\infty(0,T;H^2(\O))\,,\\
Q&\, \in H^2(0,T;L^2(\Omega;\mathcal{Q}))\cap H^1(0,T;H^2(\Omega;\mathcal{Q}))\cap L^\infty(0,T;H^3(\O;\mathcal{Q})).\,
\end{split}
\end{equation}
\end{theorem}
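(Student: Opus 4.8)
The plan is to decouple the problem into a temporal-regularity step and a spatial-regularity bootstrap, as announced in the abstract. First I would set up a linearization of \eqref{yuning:be}: freeze the transport velocity $u$ and the nonlinear coefficients (the $Q$-dependent operators $\mathcal{S}_Q$, the bulk term $\mathcal{J}(Q)$, the distortion stress $\sigma^d$) at a given pair $(\bar u,\bar Q)$ drawn from a suitable closed ball in the solution space \eqref{eq:1.27}, leaving the principal part $-\mathcal{L}(Q)$ in \eqref{qequ} and $\nu D(u)$ for the hydrodynamics implicit. The key analytic input here is Lemma \ref{lem:estimate-L}: the strong Legendre (Legendre--Hadamard with a uniform ellipticity constant) condition for the fourth-order operator $-\mathcal{L}$ under the hypothesis \eqref{eq:2.01}. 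This lets me run a Galerkin scheme in space together with a higher-order energy estimate — testing the $Q$-equation against $\partial_t Q$ and against $\mathcal{L}(Q)$-type multipliers, and the $u$-equation against $\partial_t u$ — to get, on a short time interval, the bounds
\[
u\in H^1(0,T;H^1_\sigma)\cap L^\infty(0,T;H^2),\qquad Q\in H^1(0,T;H^2)\cap L^\infty(0,T;H^3),
\]
with the $H^2(0,T;\cdot)$ regularity in time coming from differentiating the equations once in $t$ and repeating the estimate; this is where the compatibility condition encoded in $\Zspace$ and the assumption $Q_0|_{\p\Omega}\in H^{5/2}(\partial\Omega)$ are used, to make $\partial_t Q|_{t=0}$ lie in $H^1_0$ and to handle the inhomogeneous boundary trace of $Q$ by subtracting a stationary lift. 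I would then close a contraction/fixed-point argument (Banach or Schauder) on the map $(\bar u,\bar Q)\mapsto (u,Q)$ on a possibly smaller time interval, yielding a unique local solution with the regularity \eqref{eq:1.27} except possibly a deficit in pure spatial smoothness of $u$.

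The second step upgrades the spatial regularity of $u$. Here the point is that, having already placed $Q\in L^\infty(0,T;H^3)\cap H^1(0,T;H^2)$ from the first step, the right-hand side $\nabla\cdot(-u\otimes u+\sigma^s+\sigma^a+\sigma^d)$ of \eqref{eq:BE-v} together with the time derivative $u_t$ can be regarded as a known forcing in $L^2(0,T;L^2)$ (and better), so \eqref{eq:BE-v}--\eqref{eq:BE-v} becomes a generalized (quasi-stationary) Stokes system
\[
-\nu\,\nabla\cdot D(u)+\nabla P = F,\qquad \nabla\cdot u=0,\qquad u|_{\partial\Omega}=0,
\]
with $F$ controlled by the already-established norms of $u$ and $Q$. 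Applying the stationary $L^p$/Sobolev regularity theory for the Stokes operator on the bounded domain $\Omega$ pointwise in $t$ gives $u(t)\in H^2$ with a quantitative bound, and then a short bootstrap — feeding the improved $u$ back into $\sigma^s,\sigma^a,\sigma^d$ and into the $Q$-equation — confirms that no regularity is lost and that all the norms in \eqref{eq:1.27} are finite. Uniqueness follows from an energy estimate on the difference of two solutions, using the Lipschitz dependence of all nonlinear terms on $(u,Q)$ within the regularity class \eqref{eq:1.27} and Gronwall's inequality.

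The main obstacle, and the place where the bulk of the work lies, is the higher-order energy estimate for the linearized $Q$-system in the presence of the \emph{inhomogeneous} boundary data $Q_0|_{\partial\Omega}$ and of the full anisotropic operator $-\mathcal{L}$ rather than a Laplacian: one cannot treat $L_2,L_3$ as a perturbation under \eqref{eq:2.01}, so the energy identities must be organized so that the quadratic form associated with $-\mathcal{L}$ appears with its Legendre-condition coercivity, and all lower-order and boundary commutator terms must be absorbed. Controlling $\|Q\|_{L^\infty_t H^3}$ requires testing with third-order multipliers and carefully handling the traces of $\nabla^2 Q$ on $\partial\Omega$ — which is exactly the step that was missing in \cite{AbelsDolzmannLiu2014,guillen2014weak} — and making the nonlinear terms $\mathcal{S}_Q(D(u))$ and $\sigma^d$, which are at the level of $\nabla Q\cdot\nabla^2 Q$ and $D(u)\cdot Q$, fit inside this scheme via product and interpolation estimates in three dimensions without losing the smallness in $T$ needed for the contraction.
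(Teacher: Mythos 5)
There is a genuine gap in your Step 2, and your Step 1 overclaims in a way that makes the argument circular. In your Step 2 you write that $\nabla\cdot(-u\otimes u+\sigma^s+\sigma^a+\sigma^d)$ and $u_t$ ``can be regarded as a known forcing in $L^2(0,T;L^2)$'' once you have $Q\in L^\infty(0,T;H^3)$, and you then apply regularity for the \emph{constant-coefficient} Stokes operator $-\nu\nabla\cdot D(u)$. But $\sigma^s$ and $\sigma^a$ contain $\mathcal{H}(Q)$, which involves $\Delta Q$; their divergences therefore involve $\nabla\Delta Q$, i.e.\ third spatial derivatives of $Q$. So your forcing is only in $L^2$ if you \emph{already} know $Q\in L^\infty_t H^3$ — the very piece of spatial regularity the theorem is asserting. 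You try to supply this via ``third-order multipliers'' and ``careful handling of the traces of $\nabla^2 Q$ on $\partial\Omega$'' in your Step 1, but this is precisely the step that could not be closed in \cite{AbelsDolzmannLiu2014,guillen2014weak} in the presence of an inhomogeneous Dirichlet trace for $Q$, and the paper's whole point is to avoid it. In the actual proof, Step 1 (Propositions \ref{linearizedprop}--\ref{nonlinearterms} plus a Banach fixed point) yields only time regularity: $u\in H^2(0,T;H^{-1}_\sigma)\cap H^1(0,T;H^1_\sigma)$ and $Q\in H^2(0,T;L^2)\cap H^1(0,T;H^2)$, hence merely $u\in C([0,T];H^1)$ and $Q\in C([0,T];H^2)$. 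No $L^\infty_t H^3$ bound for $Q$ appears at that stage.

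The idea you are missing is the algebraic identity \eqref{eq:stokes}: using the $Q$-equation \eqref{eq:BE-Q}, replace $\mathcal{H}(Q)$ inside $\sigma^s+\sigma^a$ by $-\mathcal{T}(Q,\nabla u)+(\partial_t Q+u\cdot\nabla Q)$. This \emph{eliminates every third-order spatial derivative of $Q$} from the momentum equation and converts \eqref{eq:BE-v} into the generalized Stokes system \eqref{eq:stokes1}, whose leading coefficient $\hat A^{k\ell}_{ij}(Q(x,t))$ is a \emph{variable-coefficient} quadratic form in $Q$ (not in $\nabla Q$), shown via the cancellation law \eqref{yuning:cal1} to satisfy the strong Legendre condition. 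At this point the relevant tool is not classical constant-coefficient Stokes regularity but Solonnikov's $L^p$-theory for the generalized Stokes system (Proposition \ref{lem1} / Corollary \ref{lem2}), which requires $A\in C(\bar\Omega)\cap W^{1,6}$ — exactly what $Q\in C([0,T];H^2)$ supplies. This gives $u\in L^\infty_t W^{2,3/2}$ first; feeding this into the $Q$-equation and invoking the $W^{2,p}$-estimate of Lemma \ref{lem:estimate-L} for the anisotropic operator $\mathcal{L}$ gives $Q\in L^\infty_t W^{3,3/2}$; a second pass through Corollary \ref{lem2} with $L^2$ data then lands you at $u\in L^\infty_t H^2$ and $Q\in L^\infty_t H^3$. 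Without the substitution \eqref{eq:stokes}, the forcing in your Stokes system is not controlled by the Step 1 regularity, and your argument does not close.
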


 Theorem \ref{thm:localstrong} essentially improves the spatial regularity of solution obtained in \cite{AbelsDolzmannLiu2014} and generalizes their result to the case of anisotropic elastic energy.  This is accomplished by the crucial observation
that the  terms  containing third order derivatives on $Q$ in \eqref{eq:BE-v} can be eliminated and  the system can be
reduced into   a Stokes-type system with positive definite  viscosity coefficient. Moreover,   under the general assumption
\eqref{eq:2.01},  the   operator $\mathcal{L}$ defined by \eqref{qequ} is
strongly elliptic. This fact leads to  $W^{2,p}$-estimates for the solution so that we can work with the general case of anisotropic energy
rather than the isotropic energy case ($L_2=L_3=0$). The strong ellipticity of $\mathcal{L}$ is proved in Lemma \ref{lem:estimate-L} by an explicit construction
of the coefficient matrix, which involves a fairly sophisticated anisotropic tensor of order six.
We also mention that the local in time strong solution  constructed here is valid for any $\xi\in\R$.

The rest parts of the work is organized as follows. In Section \ref{sec:Notation}, we introduce
notation and analytic tools that will be used throughout the paper. The most important results
involve  the solvability theorem on the generalized Stokes system, due to Solonnikov
\cite{MR1855442}, as well as Lemma \ref{lem:estimate-L} on the analysis of the operator $\mathcal{L}$.
In Section~\ref{fasetup} an abstract evolution equation that   incorporates \eqref{yuning:be},
\eqref{yuning:BE-ini-bou} and a  compatibility condition is introduced and the functional
analytic framework is established. The core part, Section \ref{section3}, is devoted to the proof
of Theorem \ref{thm:localstrong} by showing that the abstract evolution equation has a local in
time solution. This is accomplished  by proving the existence of a local in time solution that is
regular in temporal variable in the first stage, following the method in \cite{AbelsDolzmannLiu2014},
and then using the structure of \eqref{yuning:be} to eliminate the higher order terms in the additional
stress tensors of  \eqref{eq:BE-v} and recast it  into a generalized Stokes system. Afterwards, the spatial
regularity of \eqref{yuning:be} with initial-boundary condition \eqref{yuning:BE-ini-bou} is improved
using the $L^p$-estimate of the generalized Stokes system together with   bootstrap arguments.


\section{Preliminaries}\label{sec:Notation}

\subsection{Notations} Throughout this paper,   the Einstein's summation convention will be adopted. That is, we shall sum over  repeated greek letters. For any $3\times 3$ martrix $A,B\in\mathbb{R}^{3\times 3}$,   their usual  matrix product will be denoted by $A\cdot B$ or even shortly by $AB$ if it is clear from the context. The Frobenius product  of two matrices corresponds to $A:B= \tr AB^T =  A_{ij}B_{ij}$ and this induces a norm   $|A|=\sqrt{A_{ij}A_{ij}}$.  For any  matrix-valued function $F=(F_{ij})_{1\leq i,j\leq 3}$,  we denote $F_{ij,k}=\partial_kF_{ij}$ and   $\div F=\nabla\cdot F = (\partial_j F_{ij})_{1\leq i\leq 3}$.

In tensor analysis,  the Levi-Civita symbol $\{\ve^{ijk}\}_{1\leq i,j,k\leq 3}$ and Kronecker symbol $\{\delta^i_j\}_{1\leq i,j \leq 3}$  are very useful to deal  with operations involving inner and wedge product:  for any $a,b\in\R^3$, their inner and wedge products are given by
\begin{equation*}
a\cdot b= a_ib_j\delta_i^j,\qquad a\wedge b=(a_jb_k\ve^{ijk})_{1\leq i\leq 3}
\end{equation*}
respectively. The following identity is well known:
 \begin{equation}\label{eq:1.02}
   \ve^{ijk}\ve^{imn}=(\delta^{j}_{m}\delta^{k}_{n}-\delta^{j}_{n}\delta^{k}_m).
 \end{equation}
For any vector field $u$, its divergence and curl can be calculated by
 \begin{equation}\label{eq:1.03}
 \nabla \cdot u=\delta_i^j u_{i,j},\qquad  \nabla\wedge u=(\ve^{ijk}u_{  k,j})_{1\leq i\leq 3}.
 \end{equation}
\subsection{Function spaces and the generalized Stokes system}
Throughout this work,   $\O\subset \R^3$ will be a bounded domain with smooth boundary and $\O_T=\O\times (0,T)$ will denote the parabolic cylinder. Standard notation for the Lebesgue and Sobolev spaces $L^p(\Omega)$ and
$W^{s,p}( \Omega )$ as well as $L^p( \Omega ;M)$ and $W^{s,p}( \Omega ;M)$ for the corresponding spaces for $M$-valued functions will be employed.
Sometimes  the domain and the range is omitted for simplicity if it is clear from the context.
The $L^2$-based Sobolev spaces are denoted by   $H^s( \Omega ;M)$ or simply by $H^s(\O)$.
For any Banach space $\mathscr{H}$,  $\langle \cdot,\cdot\rangle_{\mathscr{H},\mathscr{H}^*}$ denotes the dual product  between $\mathscr{H}$ and its dual space $\mathscr{H}^*$ and we shall simply write   $\langle\cdot,\cdot\rangle$ if the function spaces under consideration are clear from the context.

An important result  related to the incompressible Navier-Stokes equation is   the Helmholtz decomposition
\begin{align*}
 L^2(\Omega;\R^3) = \HNSO \oplus {\HNSObb}^\perp,
\end{align*}
where $L^2_\sigma(\O)$ denotes the space of solenoidal vector field and its orthogonal space is given by
\begin{equation*}
  {\HNSObb}^\perp = \bset{ u \in L^2(\Omega;\R^3),\,u=\nabla q
\text{ for some }q\in H^1(\Omega) }\,.
\end{equation*}
The Helmholtz projection (also referred to as Leray projection), i.e., the orthogonal projection $L^2( \Omega ;\R^3) \mapsto \HNSO$,
is denoted by $P_\sigma$. The readers can refer to \cite{BuchSohr} for its basic properties. For any  $f\in H^{-1}( \Omega ;\R^3)$,   $P_\sigma f \in H^{-1}_{\sigma}( \Omega )$ is interpreted by $P_\sigma f=f|_{H^1_{0,\sigma}( \Omega )}$. Moreover, for $F\in L^2( \Omega ;\R^{3\times 3})$, $\diverg F\in H^{-1}( \Omega ;\R^3)$ is defined by
\begin{equation*}
  \weight{\diverg F, \Phi}_{H^{-1},H^1_0}= -\int_{\Omega} F: \nabla \Phi \dv{x},\qquad \text{for all }\Phi\in H^1_0( \Omega ;\R^3)\, .
\end{equation*}

We end this subsection by  the following result due to Solonnikov \cite{MR1855442}, which is crucial in the discussion of  the spatial regularity during the proof of Theorem \ref{thm:localstrong}.
\begin{prop}\label{lem1}
Let  $\O\subset\R^d~(d=2,3)$ be a bounded domain with smooth boundary  and  $p> 3/2$. Assume that the tensor-valued function   $A^{k\ell}_{ij}(x,t)\in C(\bar{\O}_T)$ satisfies   $A^{k\ell}_{ij}(\cdot, t)\in W^{1,q}(\O)$ for almost every $t\in [0,T]$, with  $\frac 1	q<\frac 1d+\min (\frac 1p,\frac{p-1}p,\frac 1d)$, and the strong Legendre condition, i.e.  there exists two positive constants $\Lambda>\lambda>0$ such that
\begin{equation}\label{eq:1.26}
\lambda |\xi|^2\leq \xi_{ki}A^{k\ell}_{ij}(x,t)\xi_{j\ell}\leq \Lambda |\xi|^2,\quad \forall  \xi\in\R^{3\times 3},\quad \forall (x,t)\in \bar{\O}\times (0,T),
\end{equation}
then for any $v^0\in W^{2-2/p,p}(\O;\R^3)$ with $\div v^0=0$ and $f\in L^p(\O_T;\R^3)$,  the system
\begin{equation*}
  \left\{
  \begin{array}{rl}
  \p_t v_k&= A^{k\ell}_{ij}(x,t) \pa_i\pa_j v_\ell+\p_kP+f_k,~\text{with}~1\leq k\leq d,\\
    \div v&=0,\\
    v|_{t=0}&=v^0,\\
    v|_{\p\O}&=0
  \end{array}
  \right.
\end{equation*}
has a unique solution $(v,P)$ such that $$v\in L^p(0,T;W^{2,p}(\O)),~\text{and}~  v_t,\nabla P\in L^p(\O_T).$$
Moreover, the following estimate holds for some constant  $C$ that is independent of $v^0$ and $f$:
\begin{equation*}
\|v_t\|_{L^p(\O_T)}+\|v\|_{L^p(0,T;W^{2,p}(\O))}+\|\nabla P\|_{L^p(\O_T)}\leq C(\|v^0\|_{W^{2-2/p,p}(\O)}+\|f\|_{L^p(\O_T)}).
\end{equation*}
The result is still valid when $p=3/2$ and $v^0\equiv 0$.
\end{prop}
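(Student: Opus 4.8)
\textbf{Proof proposal for Proposition \ref{lem1}.}

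The plan is to reduce the generalized Stokes system with variable coefficients to the constant-coefficient generalized Stokes system treated by Solonnikov in \cite{MR1855442}, and then to run a perturbation and localization argument. First I would fix a reference time, say $t=0$, and freeze the coefficients to obtain the constant tensor $A^{k\ell}_{ij}(x_0,0)$ at a fixed point $x_0$; the strong Legendre condition \eqref{eq:1.26} guarantees that the corresponding constant-coefficient operator $\partial_t v_k - A^{k\ell}_{ij}(x_0,0)\partial_i\partial_j v_\ell$ is parabolic in the sense required for the maximal $L^p$ regularity theory of the associated (generalized) Stokes problem. For the constant-coefficient system on the half-space and on a smooth bounded domain, the a priori estimate
\[
\|v_t\|_{L^p(\O_T)}+\|v\|_{L^p(0,T;W^{2,p}(\O))}+\|\nabla P\|_{L^p(\O_T)}\leq C\bigl(\|v^0\|_{W^{2-2/p,p}(\O)}+\|f\|_{L^p(\O_T)}\bigr)
\]
together with existence and uniqueness is exactly what Solonnikov's results provide, once one notes that the divergence constraint is preserved and the pressure is recovered from the compatibility of the data. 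The assumption $p>3/2$ enters here through the trace theory for the initial datum in $W^{2-2/p,p}$ and through the need for $W^{2,p}\hookrightarrow C^{0,\alpha}$-type embeddings in dimension $d\le 3$ used below; the borderline case $p=3/2$ with $v^0\equiv 0$ is handled by the corresponding borderline statement in \cite{MR1855442}, where the absence of an initial layer removes the obstruction.

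Next I would pass from constant to variable coefficients. Write $A^{k\ell}_{ij}(x,t) = A^{k\ell}_{ij}(x_0,0) + \bigl(A^{k\ell}_{ij}(x,t)-A^{k\ell}_{ij}(x_0,0)\bigr)$ and move the difference to the right-hand side, so that the equation becomes the frozen-coefficient system with forcing $f_k + \bigl(A^{k\ell}_{ij}(x,t)-A^{k\ell}_{ij}(x_0,0)\bigr)\partial_i\partial_j v_\ell$. Using a partition of unity subordinate to a finite cover of $\bar\O$ by small balls, together with a short time interval, the continuity $A^{k\ell}_{ij}\in C(\bar\O_T)$ makes the sup-norm of the difference $A^{k\ell}_{ij}(x,t)-A^{k\ell}_{ij}(x_0,0)$ as small as we like on each patch; hence the operator norm of $v\mapsto \bigl(A^{k\ell}_{ij}-A^{k\ell}_{ij}(x_0,0)\bigr)\partial_i\partial_j v$ from $L^p(0,T;W^{2,p})$ into $L^p(\O_T)$ can be made $<1/(2C)$, and a Neumann series / contraction mapping argument in the space $\{v: v_t, \nabla^2 v\in L^p(\O_T),\ \div v=0,\ v|_{\partial\O}=0\}$ yields existence, uniqueness and the stated estimate on a small time interval. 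The condition $\frac1q < \frac1d + \min\!\bigl(\frac1p,\frac{p-1}{p},\frac1d\bigr)$ is used precisely to control the commutator terms that arise from the partition of unity (where derivatives fall on the cutoff functions and on the coefficients): by Hölder's inequality and Sobolev embedding, $\nabla A \in L^q$ multiplied by $\nabla v\in L^p(0,T;W^{1,p})$ stays in $L^p(\O_T)$ exactly under this arithmetic relation between $p$, $q$ and $d$. Finally, one reassembles the local-in-time solution into a global-in-$[0,T]$ one by iterating over finitely many time steps of uniform length, since the constants in the estimate do not deteriorate; this also upgrades uniqueness on $[0,T]$.

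The main obstacle I anticipate is the interaction between the pressure and the variable-coefficient elliptic operator during localization: unlike the scalar parabolic case, one cannot simply multiply by a cutoff, because $\div(\chi v)\neq 0$ and because the pressure is nonlocal. The remedy is the Bogovskii operator: after multiplying by $\chi$ one corrects the divergence by subtracting $\mathcal B(\nabla\chi\cdot v)$, where $\mathcal B$ is a bounded right inverse of the divergence on $W^{1,p}\to W^{2,p}$ with zero boundary values; the correction terms are lower order and are absorbed into the right-hand side. One must also verify that the frozen-coefficient solvability result of \cite{MR1855442} is genuinely available for a general strongly Legendre-elliptic constant tensor $A^{k\ell}_{ij}(x_0,0)$ (not only for the classical Laplacian): this is where the strong Legendre condition \eqref{eq:1.26}, rather than mere Legendre--Hadamard ellipticity, is essential, as it guarantees the resolvent estimates / $\mathcal R$-boundedness underlying maximal $L^p$ regularity for the associated Stokes operator. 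Granting these two points, the argument above is essentially a standard freezing-of-coefficients scheme, and the proof reduces to bookkeeping of the commutator estimates under the stated integrability hypotheses on $\nabla A$.
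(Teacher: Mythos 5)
There is nothing to compare against: the paper does not prove Proposition \ref{lem1} at all. It is quoted verbatim as a theorem of Solonnikov \cite{MR1855442}, whose result already covers the \emph{variable}-coefficient generalized Stokes system under exactly the stated hypotheses ($A^{k\ell}_{ij}\in C(\bar\O_T)$, $A^{k\ell}_{ij}(\cdot,t)\in W^{1,q}$, strong Legendre condition), and it is used in the paper purely as a black box. Your proposal therefore re-derives the perturbation/localization layer of that theorem while explicitly ``granting'' its hardest ingredient, namely maximal $L^p$ regularity (with pressure) for the frozen-coefficient generalized Stokes problem with a general strongly Legendre-elliptic constant tensor, Dirichlet boundary condition and divergence constraint. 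That model problem, solved on the half-space via explicit solution formulas and then transplanted to a bounded domain, is precisely the core of Solonnikov's paper; assuming it and then freezing coefficients does not produce an independent proof, and as a reduction to the cited literature it is redundant, since the cited theorem already handles variable coefficients.

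Beyond this structural point there are concrete gaps in the sketch itself. First, the role you assign to the exponent condition $\frac1q<\frac1d+\min\bigl(\frac1p,\frac{p-1}p,\frac1d\bigr)$ is misidentified: the equation is in non-divergence form, so your freezing decomposition produces only the error term $\bigl(A-A(x_0,0)\bigr)\partial_i\partial_j v$, controlled by mere continuity of $A$, and no commutator of the form $\nabla A\cdot\nabla v$ appears from the partition of unity (derivatives of cutoffs hit $v$ and $P$, not $A$); in Solonnikov's argument the $W^{1,q}$ hypothesis enters through the pressure and localization estimates, which your sketch does not actually carry out. Second, the pressure is the genuinely delicate object here: multiplying by a cutoff and correcting with a Bogovskii operator is a reasonable plan, but you must then estimate the localized pressure (which solves a variable-coefficient elliptic problem coupled to $v$) in $L^p$, and this is where the smallness/contraction bookkeeping can fail unless done carefully; asserting that the correction terms are ``lower order and absorbed'' is not a proof. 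Third, the iteration in time requires checking at each step that $v(t_1)\in W^{2-2/p,p}$ with $\div v(t_1)=0$ (this does follow from the maximal-regularity class, but it must be said), and the borderline case $p=3/2$, $v^0\equiv0$ is simply asserted rather than derived. In short, the proposal is a plausible outline of the standard freezing-of-coefficients scheme, but it neither replaces the citation nor closes the steps that make the cited theorem nontrivial.
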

 In the sequel, we shall also need the stationary version of the above result when $p=\frac 32$:
 \begin{corollary}\label{lem2}
Assume   $A^{k\ell}_{ij}(x)\in C(\bar{\O})\cap W^{1,6}(\O)$ and  there exists two positive constants $\Lambda>\lambda>0$ such that
\begin{equation}\label{eq:1.31}
\lambda |\xi|^2\leq \xi_{ki}A^{k\ell}_{ij}(x)\xi_{j\ell}\leq \Lambda |\xi|^2,\quad \forall  \xi\in\R^{3\times 3},\quad \forall x\in \bar{\O} ,
\end{equation}
then for any  $f\in L^{3/2}(\O;\R^3)$,  the system
\begin{equation}\label{eq:1.32}
  \left\{
  \begin{array}{rl}
\pa_i(  A^{k\ell}_{ij}(x) \pa_j v_\ell)+\p_kP&=f_k,~(1\leq k\leq d)\\
    \div v&=0,\\
    v|_{\p\O}&=0  \end{array}
  \right.
\end{equation}
has a unique solution $(v,P)$ with $v\in W^{2,3/2}(\O),~\nabla P\in L^{3/2}(\O)$ and the following estimate holds
\begin{equation*}
 \|v\|_{ W^{2,3/2}(\O)}+\|\nabla P\|_{L^{3/2}(\O)}\leq C(1+\|A\|_{W^{1,6}(\O)})\|f\|_{L^{3/2}(\O)}.
\end{equation*}
Moreover, if $f\in L^2(\O)$, we have the improved estimate
\begin{equation*}
 \|v\|_{ W^{2,2}(\O)}+\|\nabla P\|_{L^{2}(\O)}\leq C(1+\|A\|^2_{W^{1,6}(\O)})\|f\|_{L^{2}(\O)}.
\end{equation*}
In the above two inequalities,   $C$ only depends on the continuous modulus of $A^{k\ell}_{ij}(x)$ and geometric information of $\O$.
\end{corollary}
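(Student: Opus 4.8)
The plan is to deduce Corollary~\ref{lem2} from Proposition~\ref{lem1} by passing to the stationary regime, i.e.\ by freezing time and regarding a solution of the elliptic system \eqref{eq:1.32} as a time-independent solution of the parabolic system in Proposition~\ref{lem1}. Concretely, I would first rewrite \eqref{eq:1.32} in non-divergence form: since $A^{k\ell}_{ij}\in W^{1,6}(\O)\hookrightarrow C(\bar\O)$ in dimension $d\le 3$, one has $\pa_i\barg{A^{k\ell}_{ij}\pa_j v_\ell}=A^{k\ell}_{ij}\pa_i\pa_j v_\ell+(\pa_i A^{k\ell}_{ij})\pa_j v_\ell$, so \eqref{eq:1.32} becomes $A^{k\ell}_{ij}\pa_i\pa_j v_\ell+\p_kP=f_k-(\pa_i A^{k\ell}_{ij})\pa_j v_\ell$. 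One checks that $\frac1q=\frac16$ satisfies the admissibility constraint of Proposition~\ref{lem1} at $p=\tfrac32$, namely $\frac16<\frac13+\min(\tfrac23,\tfrac13,\tfrac13)=\frac13+\frac13$, and the ellipticity hypothesis \eqref{eq:1.31} is exactly \eqref{eq:1.26} with the $t$-dependence suppressed. So Proposition~\ref{lem1} (in its $p=\tfrac32$, $v^0\equiv0$ form) applies on any cylinder $\O_T$ to the system with right-hand side $f_k-(\pa_i A^{k\ell}_{ij})\pa_j v_\ell$, provided this right-hand side is in $L^{3/2}(\O_T)$.

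For existence I would run a fixed-point / continuity argument. Given $w\in W^{1,3/2}(\O)$ (extended to be time-independent on $\O_T$), the term $g_k:=f_k-(\pa_i A^{k\ell}_{ij})\pa_j w_\ell$ lies in $L^{3/2}(\O_T)$ because $\pa_i A\in L^6$ and $\pa_j w\in L^2$ give $g\in L^{3/2}$ by H\"older, with $\|g\|_{L^{3/2}(\O)}\le\|f\|_{L^{3/2}}+C\|A\|_{W^{1,6}}\|w\|_{W^{1,3/2}}$ (here $W^{1,3/2}\hookrightarrow L^q$ with $q$ large enough, or simply use $\|\pa w\|_{L^2}$ controlled by interpolation — more cleanly, close the argument in $W^{2,3/2}$ directly since $W^{2,3/2}\hookrightarrow W^{1,2}$ in $d\le3$). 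Proposition~\ref{lem1} then produces a solution $v$ of the linearized problem with $\|v\|_{L^{3/2}(0,T;W^{2,3/2})}+\|v_t\|_{L^{3/2}(\O_T)}+\|\nabla P\|_{L^{3/2}(\O_T)}\le C\|g\|_{L^{3/2}(\O_T)}$. Since $v^0=0$ and the data are time-independent, uniqueness in Proposition~\ref{lem1} forces $v$ to be time-independent as well (translating in $t$ gives another solution with the same data), hence $v_t=0$ and $v$ solves the elliptic problem; dividing the spacetime estimate by $T^{1/(3/2)}$ yields the stationary bound. Iterating $w\mapsto v$ and absorbing the $\|A\|_{W^{1,6}}\|v\|_{W^{2,3/2}}$ term — either by a smallness-in-$T$ trick at the parabolic level or, more robustly, by a standard a priori estimate plus method-of-continuity in a homotopy parameter multiplying the lower-order term — gives $\|v\|_{W^{2,3/2}}+\|\nabla P\|_{L^{3/2}}\le C(1+\|A\|_{W^{1,6}})\|f\|_{L^{3/2}}$. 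Uniqueness for \eqref{eq:1.32} itself follows from the same estimate applied to the difference of two solutions with $f=0$.

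For the improved $L^2$ bound, assume $f\in L^2(\O)$. Starting from the solution just constructed, which already lies in $W^{2,3/2}(\O)\hookrightarrow W^{1,6}(\O)$ in $d=3$, the lower-order term $(\pa_i A^{k\ell}_{ij})\pa_j v_\ell$ is now in $L^3(\O)$ (product of two $L^6$ functions) $\hookrightarrow L^2(\O)$, so the right-hand side $g$ of the non-divergence form is in $L^2(\O)$ with $\|g\|_{L^2}\le\|f\|_{L^2}+C\|A\|_{W^{1,6}}\|v\|_{W^{1,6}}\le\|f\|_{L^2}+C\|A\|_{W^{1,6}}(1+\|A\|_{W^{1,6}})\|f\|_{L^{3/2}}$, and $\|f\|_{L^{3/2}(\O)}\le C\|f\|_{L^2(\O)}$ on the bounded domain $\O$. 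Applying Proposition~\ref{lem1} with $p=2$ (now $v^0=0$ is allowed and the constraint on $q=6$ reads $\frac16<\frac13+\min(\tfrac12,\tfrac12,\tfrac13)$, which holds) to this $L^2$ right-hand side, and again using time-independence to drop $v_t$, gives $\|v\|_{W^{2,2}}+\|\nabla P\|_{L^2}\le C\|g\|_{L^2}\le C(1+\|A\|_{W^{1,6}}^2)\|f\|_{L^2}$, as claimed. The dependence of $C$ on only the continuity modulus of $A$ and the geometry of $\O$ is inherited directly from the corresponding statement in Proposition~\ref{lem1}.

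I expect the main obstacle to be the bookkeeping in the fixed-point step: one must make sure the map contracts (or that the continuity method closes) despite the factor $\|A\|_{W^{1,6}}$ multiplying the top-order norm of $v$ in the lower-order term, which is why it is cleaner to keep the iteration in $W^{2,3/2}$ and exploit the Sobolev embedding $W^{2,3/2}(\O)\hookrightarrow W^{1,q}(\O)$ with $q$ strictly above $2$ (valid for $d\le3$) so that the lower-order term is controlled with a gain of integrability, allowing a Young-type absorption after choosing $T$ small in the parabolic estimate — and then noting the resulting stationary estimate is $T$-independent. The verification of the exponent constraints on $q$ for $p=3/2$ and $p=2$ is routine but must be done explicitly to invoke Proposition~\ref{lem1}.
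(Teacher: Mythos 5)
Your overall plan (reduce \eqref{eq:1.32} to the parabolic Proposition~\ref{lem1}) is the same as the paper's, and your exponent checks for $q=6$, $p=3/2,2$ are fine, but the step that actually produces a stationary solution and a stationary estimate is broken. You claim that, because the forcing is time-independent and $v^0=0$, uniqueness in Proposition~\ref{lem1} plus time translation forces $v_t\equiv 0$. That is false: if $v(x,t)$ solves the parabolic problem with $v|_{t=0}=0$, then $v(x,t+s)$ solves it on a shorter interval with initial datum $v(\cdot,s)\neq 0$, not $0$, so no contradiction with uniqueness arises; the parabolic solution with zero initial data and steady forcing is genuinely time-dependent (it only tends to the steady state as $t\to\infty$). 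Consequently you can neither drop $v_t$ nor divide the space-time bound by $T^{2/3}$ to get the stationary estimate, and your fixed-point/continuity scheme then has no stationary a priori bound to close on. Note also that for a stationary $v$ regarded as a time-independent solution the natural initial datum is $v$ itself, whereas the $p=3/2$ case of Proposition~\ref{lem1} is only stated for $v^0\equiv 0$, so you cannot simply ``plug in'' a steady solution either. A further, related soft spot is the absorption of the lower-order term $(\pa_i A^{k\ell}_{ij})\pa_j v_\ell$: since $\|A\|_{W^{1,6}}$ is not small, any contraction or continuity argument needs an independent control of a lower-order norm of $v$, which you never supply.

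The paper's route avoids all of this. Existence and uniqueness of a weak solution $u\in H^1_{0,\sigma}(\O)$ come first, by Lax--Milgram: the strong Legendre condition \eqref{eq:1.31} makes the bilinear form $\int_\O \p_i u_k A^{k\ell}_{ij}\p_j v_\ell\,\mathrm{d}x$ coercive, and $L^{3/2}(\O)\hookrightarrow H^{-1}(\O)$. This energy step is exactly what controls the lower-order term later. Then, instead of a time-translation argument, one sets $v(x,t)=\zeta(t)u(x)$ with a smooth cutoff $\zeta$ vanishing for $t\le 0$ and equal to $1$ for $t\ge 1$; this $v$ has zero initial datum and solves the parabolic system on $\O\times(0,2)$ with right-hand side $\zeta\,\p_iA^{k\ell}_{ij}\p_j u_\ell+\zeta' u-\zeta f\in L^{3/2}$, so Proposition~\ref{lem1} (at $p=3/2$, $v^0\equiv0$) applies to this specific function and, restricting to $t\in[1,2]$ where $\zeta\equiv1$, yields the stationary $W^{2,3/2}$ bound with the factor $(1+\|\nabla A\|_{L^6})$; the $L^2$ case is the same bootstrap you intend, using $u\in W^{2,3/2}\hookrightarrow W^{1,3}$ so that $\p A\cdot\nabla u\in L^6\cdot L^3\subset L^2$. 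On that last point, your claimed embedding $W^{2,3/2}(\O)\hookrightarrow W^{1,6}(\O)$ in $d=3$ is wrong (the correct endpoint is $W^{1,3}$), though the conclusion you need ($\check f\in L^2$ with the $(1+\|A\|_{W^{1,6}}^2)$ constant) survives with the corrected exponents.
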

\begin{proof}
Since $H^1(\O)\hookrightarrow L^6(\O)$, it follows from  duality that  $L^{6/5}(\O)\hookrightarrow H^{-1}(\O)$ and this implies that $f\in L^{3/2}(\O)\hookrightarrow H^{-1}(\O)$. The assumption on $A^{k\ell}_{ij}(x)$, especially \eqref{eq:1.31}, implies that  the bilinear form
$$a(u,v):=\int_\O \p_i u_{k}A^{k\ell}_{ij}(x)\p_j v_{\ell}\ud x$$ is coercive on $H^1_{0,\sigma}(\O)$ and the existence of solution $u\in H^1_{0,\sigma}(\O)$ to \eqref{eq:1.32} follows from Lax-Milgram theorem.
In order to obtain the $L^p$-estimate of \eqref{eq:1.32}, we set $v(x,t)=\zeta(t)  u(x)$ where $\zeta$ is a non-negative smooth function such that $\zeta(t)=0$ for $t\leq 0$ and $\zeta(t)=1$ for $t\geq 1$.  It can be verified that $v(x,t)$ satisfies the following equation on $\Omega\times [0,2]$ in the sense of distribution:
\begin{equation}\label{eq:stokes4}
  \left\{
  \begin{array}{rl}
  \p_t v_k-  A^{k\ell}_{ij}(x)\pa_i\pa_j v_\ell-\p_k (\zeta(t)P)&=\zeta(t)\p_iA_{ij}^{k\ell}(x)\p_j u_\ell+\zeta'(t)u-\zeta(t)f(x),\\
    \nabla\cdot v&=0,\\
    v|_{\(\p \Omega\times [0,2]\)\cup \(\{t=0\}\times \Omega\)}&=0.
  \end{array}
  \right.
\end{equation}
It follows from $A_{ij}^{k\ell}\in W^{1,6}(\O)$ and $u\in H^1_{0,\sigma}(\O)$ that
$$\check{f}:=\zeta(t)\p_iA_{ij}^{k\ell}(x)\p_j u_\ell+\zeta'(t)u-\zeta(t)f(x)\in  L^{3/2}(\Omega_T). $$
Using Proposition \ref{lem1}
\begin{equation*}
  \|v\|_{L^{3/2}(0,2;W^{2,3/2}(\Omega))}+\|\nabla (\zeta(t) P)\|_{L^{3/2}(\O\times (0,2) )}\leq C\| \check{f} \|_{L^{3/2}(\Omega\times (0,2))},
\end{equation*}
and this leads to
\begin{equation*}
  \|u\|_{W^{2,3/2}(\Omega)}+\|\nabla P\|_{L^{3/2}(\Omega)}\leq C \(1+\|\nabla A\|_{L^{6}(\Omega)}\)\|f\|_{L^{3/2}(\O)}.
\end{equation*}
To prove the second equality in the statement, note that  $u\in W^{2,3/2}(\O)\hookrightarrow W^{1,3}(\O)$ and together with $f\in L^2(\O)$ improves the estimate for $\check{f}$
\begin{equation*}
\|\check{f}\|_{L^2(\O\times (0,2))}\leq C (1+\|\nabla A\|^2_{L^{6}(\Omega)})\|f\|_{L^2(\O)}.
\end{equation*}
So applying Proposition \ref{lem1} to  solve \eqref{eq:stokes4} again leads to the second inequality.
\end{proof}

\subsection{Abstract parabolic equation}
Following the method in \cite{AbelsDolzmannLiu2014}, we shall prove the regular in time solution with the aid of  the following result:
\begin{prop}\label{Wlokaregularity}
Suppose that $\VWloka$ and $\HWloka$ are two separable Hilbert spaces such that
the embedding $\VWloka\hookrightarrow \HWloka$
is injective, continuous, and dense.
Fix $T\in (0,\infty)$. Suppose that a bilinear
form $a( \cdot, \cdot): \VWloka\times \VWloka\mapsto \R$ is given which satisfies
for all $\phi$, $\psi\in \VWloka$ the following assumptions:
\begin{itemize}
 \item [(a)] there exists a constant $c>0$, independent of $  \phi$  and $\psi$, with
\begin{align*}
 \bbar{ a(\phi, \psi) } \leq c \| \phi\|_\VWloka\| \psi\|_\VWloka\,;
\end{align*}
\item [(b)] there exist $k_0$, $\alpha >0$ independent of   $\phi$,
with
\begin{align*}
  a( \phi, \phi) + k_0 \| \phi\|_\HWloka^2 \geq \alpha \| \phi\|_\VWloka^2 \,;
\end{align*}
\end{itemize}
Then there exists a representation operator $L : \VWloka \mapsto \VWloka^\prime$
with $a( \phi, \psi) = \scp{ L \phi}{\psi }_{\VWloka^\prime,\VWloka}$,
which  is continuous and linear. Moreover, for all
$f\in L^2((0,T);\VWloka^\prime)$ and $y_0\in \HWloka$, there exists a
unique solution
\begin{align*}
 y\in \bset{ v:[0,T]\mapsto \VWloka \text{ with }v\in L^2(0,T;\VWloka),\,
\dt{v}\in L^2(0,T;\VWloka^\prime) }
\end{align*}
solving the equation
\begin{align*}
\dt{y}+  L  y = f \quad \text{ in } \VWloka^\prime\ \text{for a.e. }t\in (0,T)\,,
\end{align*}
subject to the initial condition $y(0) =y_0$.
Finally, assume additionally that $y_0 \in \VWloka$. Then
$L: \SonetwoWloka((0,T);\VWloka) \mapsto  \SonetwoWloka((0,T);\VWloka^\prime)$
is continuous and for all $f\in \SonetwoWloka((0,T);\VWloka^\prime)$
which satisfy the compatibility condition $f(0) \in \HWloka$, 
the solution $y$ satisfies
\begin{align*}
 y\in \SonetwoWloka((0,T);\VWloka)\quad \text{ and }\quad
 \p_t^2 y\in L^2\barg{(0,T);\VWloka^\prime}\,.
\end{align*}
\end{prop}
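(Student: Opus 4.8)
The plan is to build the solution theory of the abstract parabolic equation $\dt{y} + Ly = f$ in the standard Lions--Gelfand triple framework, and then to bootstrap the temporal regularity under the extra hypotheses. First I would define the representation operator: for fixed $\phi \in \VWloka$, the map $\psi \mapsto a(\phi,\psi)$ is bounded on $\VWloka$ by assumption (a), hence by Riesz representation there is a unique element $L\phi \in \VWloka^\prime$ with $a(\phi,\psi) = \scp{L\phi}{\psi}_{\VWloka^\prime,\VWloka}$; linearity of $L$ is immediate from bilinearity of $a$, and $\|L\phi\|_{\VWloka^\prime} \le c\|\phi\|_\VWloka$ gives continuity. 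Next, the existence and uniqueness of $y \in L^2(0,T;\VWloka)$ with $\dt y \in L^2(0,T;\VWloka^\prime)$ solving the equation with $y(0) = y_0 \in \HWloka$ is precisely the classical Lions theorem for parabolic problems: the coercivity estimate (b), $a(\phi,\phi) + k_0\|\phi\|_\HWloka^2 \ge \alpha\|\phi\|_\VWloka^2$, is exactly the Gårding inequality needed, and one reduces to the coercive case by the substitution $y(t) = e^{k_0 t}\tilde y(t)$, which replaces $L$ by $L + k_0 I$ and $f$ by $e^{-k_0 t} f$. I would cite the standard reference (e.g. Lions--Magenes, or Evans, or Zeidler) rather than reproducing the Galerkin construction, since this part is textbook; the energy identity $\frac12\frac{d}{dt}\|y\|_\HWloka^2 + a(y,y) = \scp{f}{y}$ yields the a priori bound and uniqueness, and the fact that $y \in C([0,T];\HWloka)$ makes sense of the initial condition.

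For the additional regularity statement, assume $y_0 \in \VWloka$ and $f \in H^1(0,T;\VWloka^\prime)$ with $f(0) \in \HWloka$. The key idea is to differentiate the equation formally in time: setting $z = \dt y$, one expects $z$ to solve $\dt z + Lz = \dt f$ with initial condition $z(0) = f(0) - Ly_0$. The right-hand side $\dt f$ lies in $L^2(0,T;\VWloka^\prime)$, and the initial datum $z(0) = f(0) - Ly_0$ lies in $\HWloka$ because $f(0) \in \HWloka$ by hypothesis and $Ly_0 \in \VWloka^\prime$ — wait, this is the subtle point: $Ly_0$ is only in $\VWloka^\prime$, not in $\HWloka$, so $z(0)$ as written is not obviously in $\HWloka$. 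The correct route is the classical one: apply the first part of the theorem (the $L^2$-theory) to the problem for $z$ with data $\dt f \in L^2(0,T;\VWloka^\prime)$ and initial value $z(0) = f(0) - Ly_0 \in \VWloka^\prime$, but to even start one needs $z(0) \in \HWloka$. The resolution is that the hypothesis is on $f(0)$ together with $y_0 \in \VWloka$ giving enough: actually one shows $\dt y \in C([0,T];\HWloka)$ by a difference-quotient argument. I would use the standard difference-quotient method: let $y^h(t) = (y(t+h) - y(t))/h$, which solves $\dt{y^h} + Ly^h = f^h$ with $y^h(0) = (y(h)-y(0))/h$; test with $y^h$, use coercivity, and control $\|y^h(0)\|_\HWloka$ uniformly in $h$ — this is where $y_0 \in \VWloka$ enters, since from the equation at $t=0$, $\dt y(0) = f(0) - Ly_0$, and one estimates $\|(y(h)-y(0))/h\|_\HWloka$ by integrating $\dt y$ and using that $\dt y$ near $0$ is controlled once we know $y(h) \to y_0$ in $\VWloka$. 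Passing $h \to 0$ yields $\dt y \in L^2(0,T;\VWloka)$ and $\dtt y \in L^2(0,T;\VWloka^\prime)$, and the continuity of $L$ as a map $H^1(0,T;\VWloka) \to H^1(0,T;\VWloka^\prime)$ follows from the pointwise bound $\|L\phi\|_{\VWloka^\prime} \le c\|\phi\|_\VWloka$ applied to $\phi$ and $\dt\phi$.

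The main obstacle I anticipate is the correct handling of the initial-time compatibility in the higher-regularity part: making rigorous sense of $z(0) = \dt y(0) = f(0) - Ly_0$ and establishing the uniform $\HWloka$-bound on the initial values $y^h(0)$ of the difference quotients. The clean way to organize this is to first prove, by testing the $y^h$-equation against $y^h$ and using Grönwall with the coercivity (b), that $\sup_h \big( \|y^h\|_{L^\infty(0,T;\HWloka)} + \|y^h\|_{L^2(0,T;\VWloka)} \big) < \infty$ \emph{provided} $\sup_h \|y^h(0)\|_\HWloka < \infty$; and then to verify the latter by writing $y^h(0) = \frac1h\int_0^h \dt y(s)\, ds$ and showing $\dt y \in L^2$ near $0$ with the bound degenerating only like the $\HWloka$-norm of $f(0) - Ly_0$ — which is finite once we interpret it correctly, namely one first proves $y \in C([0,T];\VWloka)$ weakly and $\dt y(0^+) = f(0) - Ly_0 \in \VWloka^\prime$, then upgrades. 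In the write-up I would lean on the reference \cite{AbelsDolzmannLiu2014} for the precise form of this lemma, since the statement is explicitly modeled on it, and present the proof as: (i) Riesz representation for $L$; (ii) citation of Lions' theorem for the $L^2$-solvability after the exponential shift; (iii) the difference-quotient argument for the $\SonetwoWloka$-regularity, with the compatibility condition $f(0) \in \HWloka$ entering exactly at the estimate of $\|y^h(0)\|_\HWloka$.
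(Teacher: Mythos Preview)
The paper does not actually prove this proposition: it simply records the statement and then writes ``The proof of this theorem can be found in \cite[Lemma~26.1 and Theorem~27.2]{WlokaPDE}.'' So there is no in-paper argument to compare against; the result is quoted as a black box from Wloka's textbook.

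Your sketch is a faithful outline of what that textbook argument contains: the representation operator $L$ comes from boundedness (a); the $L^2(0,T;\VWloka)$-solvability is Lions' theorem after the exponential shift that converts the G\aa rding inequality (b) into genuine coercivity; and the $H^1$-in-time upgrade is obtained by a difference-quotient (equivalently, time-differentiated) argument. You have also correctly flagged the only genuinely delicate point, namely controlling the initial value of $z=\dt y$ (or of $y^h(0)$) in $\HWloka$, which is exactly where the hypotheses $y_0\in\VWloka$ and $f(0)\in\HWloka$ are consumed. Since the paper offers no proof of its own, your write-up is not an alternative route but rather an unpacking of the cited reference; for the purposes of this paper a one-line citation to Wloka (or to Lions--Magenes) would be in keeping with how the authors handle it.
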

The proof of this theorem can be found in  \cite[Lemma 26.1 and Theorem 27.2]{WlokaPDE}.

\subsection{Anisotropic Laplacian}

 We consider the following bilinear form
\begin{equation}\label{yuning:bilinear}
  a( \Psi  ,\Phi)= \int_{\Omega}\Big(L_1 \Psi_{i j,k}\Phi_{i j,k}+L_2\Psi_{ij,j}\Phi_{ik,k}
+L_3\Psi_{ij,k}\Phi_{ik,j}\Big) \ud x.
\end{equation}
for $\Psi,\Phi\in H^1(\Omega;\mathcal{Q})$.
\begin{lemma}\label{lemma1}
  For any $f\in H^{-1}(\Omega;\mathcal{Q})$, there exists  a unique $ Q \in H^1_0(\Omega;\mathcal{Q})$ such that
  \begin{equation*}
   a( Q ,\Phi)=\weight{f,\Phi}~\text{for any}~\Phi\in H^1_0(\Omega;\mathcal{Q}),
     \end{equation*}
     and there exists a constant $C$ depending on the geometry of $\O$ such that
     \begin{equation*}
     \|Q\|_{H^1_0(\Omega)}\leq C \|f\|_{H^{-1}(\Omega)}.
     \end{equation*}
\end{lemma}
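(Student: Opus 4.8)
The plan is to obtain $Q$ from the Lax--Milgram theorem applied to the bilinear form $a(\cdot,\cdot)$ on the Hilbert space $H^1_0(\Omega;\mathcal{Q})$ (a closed subspace of $H^1_0(\Omega;\R^{3\times 3})$, hence complete), observing that in the statement the trial and test spaces coincide, so no further density argument is needed to identify the solution. Boundedness of $a$ is immediate from the Cauchy--Schwarz inequality, since each of the three integrands in \eqref{yuning:bilinear} is controlled pointwise by a constant times $|\nabla\Psi|\,|\nabla\Phi|$; hence $|a(\Psi,\Phi)|\le C\|\Psi\|_{H^1_0}\|\Phi\|_{H^1_0}$. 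The work is in proving coercivity, which is exactly where assumption \eqref{eq:2.01} enters.

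The first step toward coercivity is the integration-by-parts identity
\[
  \int_{\Omega} Q_{ij,k}Q_{ik,j}\,\ud x=\int_{\Omega} Q_{ij,j}Q_{ik,k}\,\ud x\qquad\text{for all }Q\in H^1_0(\Omega;\mathcal{Q}),
\]
proved for $Q\in C_c^\infty(\Omega;\mathcal{Q})$ by integrating by parts once in $x_k$ and once in $x_j$, and then extended to $H^1_0$ by density, since both sides are continuous quadratic forms on $H^1_0$. Writing $(\div Q)_i=Q_{ij,j}$, this gives
\[
  a(Q,Q)=L_1\int_{\Omega}|\nabla Q|^2\,\ud x+(L_2+L_3)\int_{\Omega}|\div Q|^2\,\ud x.
\]
Next I would invoke the elementary bound $0\le\int_{\Omega}|\div Q|^2\,\ud x\le\int_{\Omega}|\nabla Q|^2\,\ud x$ for $Q\in H^1_0(\Omega;\mathcal{Q})$; applying it to each row of $Q$ reduces it to the classical inequality $\int_{\Omega}(\div v)^2\,\ud x\le\int_{\Omega}|\nabla v|^2\,\ud x$ for $v\in H^1_0(\Omega;\R^3)$, which follows from one integration by parts and Young's inequality. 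A short case distinction then closes the estimate: if $L_2+L_3\ge0$ then $a(Q,Q)\ge L_1\int_{\Omega}|\nabla Q|^2\,\ud x$; and if $L_2+L_3<0$, i.e. $L_1>L_0:=L_1+L_2+L_3$, then $a(Q,Q)\ge L_1\int_{\Omega}|\nabla Q|^2\,\ud x+(L_0-L_1)\int_{\Omega}|\nabla Q|^2\,\ud x=L_0\int_{\Omega}|\nabla Q|^2\,\ud x$. In both cases $a(Q,Q)\ge\min(L_1,L_0)\int_{\Omega}|\nabla Q|^2\,\ud x$, and since $L_1,L_0>0$ by \eqref{eq:2.01}, the Poincaré inequality on $H^1_0(\Omega)$ upgrades this to $a(Q,Q)\ge\alpha\|Q\|_{H^1_0}^2$ for some $\alpha=\alpha(\Omega,L_1,L_2,L_3)>0$.

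With boundedness and coercivity established, the Lax--Milgram theorem yields a unique $Q\in H^1_0(\Omega;\mathcal{Q})$ satisfying $a(Q,\Phi)=\langle f,\Phi\rangle$ for all $\Phi\in H^1_0(\Omega;\mathcal{Q})$ together with $\|Q\|_{H^1_0}\le\alpha^{-1}\|f\|_{H^{-1}}$, which is precisely the asserted estimate. The only genuinely non-routine point is the coercivity step: recognizing via integration by parts that the a priori sign-indefinite term $L_3\int_\Omega Q_{ij,k}Q_{ik,j}\,\ud x$ collapses to $L_3\int_\Omega|\div Q|^2\,\ud x$, and then absorbing a possibly negative $L_2+L_3$ through the divergence inequality; everything else is standard.
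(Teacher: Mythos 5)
Your argument is correct and follows the same strategy as the paper: Lax--Milgram on $H^1_0(\Omega;\mathcal{Q})$, with coercivity obtained by integrating by parts to collapse both anisotropic terms to $\int_\Omega |\div Q|^2\,\ud x$ and then splitting cases on the sign of $L_2+L_3$. The only difference is in how you bound $\int_\Omega |\div Q|^2\,\ud x \le \int_\Omega |\nabla Q|^2\,\ud x$: you apply Young's inequality row-by-row, whereas the paper derives the sharper identity $\int_\Omega |\div Q|^2\,\ud x = \int_\Omega\bigl(|\nabla Q|^2-|\nabla\wedge Q|^2\bigr)\,\ud x$ via the Levi--Civita contraction; both are one-line computations and give the same conclusion.
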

This lemma can  be proved using the construction in the proof of Lemma \ref{lem:estimate-L} below. However, we present  a simpler proof here :
\begin{proof}[Proof of Lemma \ref{lemma1}]
One can verify that $a( Q ,\Phi)=\weight{\mathcal{L}( Q ),\Phi}_{H^{-1},H^1_0}$ where $\mathcal{L}$ is the operator defined by \eqref{qequ}.
   In order to apply Lax-Milgram theorem to deduce the existence of solution to \eqref{yuning:bilinear}, we need to show that $a(\cdot,\cdot)$ is coercive in $H^1_0(\Omega;\mathcal{Q})$:
  \begin{equation}\label{yuning:lowerbd1}
  \begin{split}
     a( Q , Q )\geq &\lambda\int_{\Omega} |\nabla Q |^2\ud x,
  \end{split}
\end{equation}
for some $\lambda>0$.
Note that, it suffices to prove the above inequality for smooth functions that vanishes on $\p\O$. Actually, for any $ Q \in H_0^1(\Omega;\mathcal{Q})$, we choose $ Q _n\in C_0^\infty(\overline{\Omega};\mathcal{Q})$ such that $ Q _n\to  Q $ strongly in $H^1_0(\Omega;\mathcal{Q})$. Then the conclusion follows from the continuity of $a(\cdot,\cdot)$.

Now we focus on the proof of \eqref{yuning:lowerbd1} for $Q\in C_0^\infty(\overline{\Omega})$.
It follows from \eqref{eq:1.02} and \eqref{eq:1.03} that
\begin{equation*}
\begin{split}
 |\nabla\wedge Q |^2&= |\ve^{ijk}Q_{\ell j,k}|^2=  \ve^{ijk}Q_{\ell j,k}\ve^{imn}Q_{\ell m,n}
= (\delta^{j}_{m}\delta^{k}_{n}-\delta^{j}_{n}\delta^{k}_m)Q_{\ell j,k}Q_{\ell  m,n}\\
&=  Q_{ij,k}Q_{ij,k}-Q_{ij,k}Q_{ik,j}= |\nabla  Q |^2-Q_{ij,k}Q_{ik,j}.
\end{split}
\end{equation*}
Therefore by  $Q|_{\p\O}=0$,
\begin{align*}\nonumber
    \int_{\Omega} Q_{ij,j}Q_{ik,k}\ud x =&\int_{\Omega}\(Q_{ij,k}Q_{ik,j}+ \p_j(Q_{ij}Q_{ik,k}) -\p_k(Q_{ij}Q_{ik,j})\)\ud x\\ \nonumber
    =&\int_{\Omega} \(\nabla Q |^2-|\nabla\wedge Q |^2\)\ud x
    +\int_{\partial \Omega}\(Q_{ij}Q_{ik,j}\nu_k-Q_{ij}Q_{ik,k}\nu_j\)\ud S\\
    =&\int_{\Omega} \(|\nabla Q |^2-|\nabla\wedge Q |^2\)\ud x.
\end{align*}
The above two formula together implies
\begin{align}\label{yuning:lowerbd}
     a( Q , Q )=&\int_\O\Big(L_1|\nabla Q |^2+L_2Q_{ij,j}Q_{ik,k}
+L_3Q_{ij,k}Q_{ik,j}\Big) \ud x\nonumber\\ \nonumber
=&\int_\O L_1|\nabla Q |^2\ud x+\int_\O(L_2+L_3)\(|\nabla  Q |^2-|\nabla\wedge Q |^2\)\ud x.
\end{align}
Therefore, it is easy to see that $$a(Q,Q)\ge\int_\O L_1|\nabla Q |^2 \ud x $$ when $L_2+L_3\ge 0,$
 and
 $$a(Q,Q)\ge\int_\O (L_1+L_2+L_3)|\nabla Q |^2 \ud x= L_0\int_\O |\nabla Q |^2\ud x$$ when $L_2+L_3\le 0$.
\end{proof}

The validity of the $L^p$-estimate requires the  verification of  the strong Legendre condition for
$\mathcal{L}$ in \eqref{eq:1.11}. To this end, we consider the following second order operator defined for $Q\in H^2(\O;\mathbb{R}^{3\times 3})$:
\begin{equation*}
\begin{split}
(\tilde{\mathcal{L}} (Q) )_{ij}=&~L_1\Delta Q_{ij}-\frac14(L_2+L_3)\Big(\p_i\partial_{k}Q_{jk}+\p_j\partial_{ k}Q_{ik}+\p_i\partial_{k}Q_{kj}+\p_j\partial_{ k}Q_{ki}\\
&\quad-\frac43\partial_{i}\partial_jQ_{kk}-\frac43\delta^i_j\p_k\partial_{\ell}Q_{k\ell }+\frac49\delta^i_j\Delta Q_{kk}\Big).
\end{split}
\end{equation*}

\begin{lemma}\label{lem:estimate-L}
Let $p>1$ be fixed.
For any $F\in L^p(\O; \mathbb{R}^{3\times 3})$ and $g\in W^{2-1/p,p}(\p\O; \mathbb{R}^{3\times 3})$, there exists a unique $ Q \in W^{2,p}(\O; \mathbb{R}^{3\times 3})$ that solves $\tilde{\mathcal{L}} Q =F$ with boundary condition $Q|_{\p\O}=g$. Moreover, there exists $C>0$ depending only on $\O$ such that
\begin{equation*}
  \| Q \|_{W^{2,p}(\Omega)}\leq C\(\|g\|_{W^{2-1/p,p}(\p\Omega)}+\|F\|_{L^p(\Omega)}\).
\end{equation*}
Especially, when $F,g\in \mathcal{Q}$, we have $Q\in W^{2,p}(\Omega;\mathcal{Q})$ satisfying $\mathcal{L}Q=F$.
\end{lemma}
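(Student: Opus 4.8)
The plan is to recognize $\tilde{\mathcal L}$ as a constant-coefficient, second-order, linear elliptic system on $3\times 3$ matrices and to invoke the classical Agmon--Douglis--Nirenberg theory (or, equivalently, the elliptic regularity for systems satisfying the strong Legendre--Hadamard / Legendre condition with Dirichlet data), which gives both existence-uniqueness and the $W^{2,p}$ estimate at once. Concretely, write $(\tilde{\mathcal L}Q)_{ij}=A^{k\ell}_{ij}\,\partial_k\partial_\ell Q_{pq}$ for a constant sixth-order tensor $A$ read off from the displayed formula, so the principal symbol is the quadratic form $\xi\mapsto \xi_{ki}A^{k\ell}_{ij}\xi_{j\ell}$ appearing in \eqref{eq:1.26}. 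The crux of the whole lemma is therefore the algebraic claim that this symbol is \emph{coercive}: there exist $0<\lambda<\Lambda$ with
\begin{equation*}
\lambda|\xi|^2\le \xi_{ki}A^{k\ell}_{ij}\xi_{j\ell}\le \Lambda|\xi|^2\qquad\text{for all }\xi\in\R^{3\times 3}.
\end{equation*}
Once this is established, standard theory (see e.g. the references used for Proposition~\ref{lem1} and Corollary~\ref{lem2}, or ADN) yields a unique $Q\in W^{2,p}(\Omega;\R^{3\times 3})$ with $\tilde{\mathcal L}Q=F$, $Q|_{\partial\Omega}=g$ and the stated estimate, with $C=C(\Omega,p)$.

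For the coercivity, I would proceed by splitting an arbitrary matrix $\xi$ into its symmetric-traceless part (which lives in $\mathcal Q$), its antisymmetric part, and its trace part, and evaluate the symmetrized operator on each. The point of the symmetrization built into $\tilde{\mathcal L}$ (the $\frac14$ average of the four index permutations, the $-\frac43$ and $\frac49$ trace corrections, the $\delta^i_j$ terms) is precisely that $\tilde{\mathcal L}$ agrees with $\mathcal L$ on traceless symmetric $Q$ and is self-adjoint, so the symbol is a genuine quadratic form. On the symmetric-traceless block the symbol equals the symbol of $\mathcal L$, whose positivity is exactly the content of the computation behind Lemma~\ref{lemma1}: using the Levi-Civita identity \eqref{eq:1.02} one rewrites the cross term $\xi_{ki}\xi_{jk}$-type contributions via $|\nabla\wedge Q|^2=|\nabla Q|^2-Q_{ij,k}Q_{ik,j}$ at the symbol level, obtaining $L_1|\xi|^2+(L_2+L_3)(|\xi|^2-|\text{curl-part}|^2)\ge \min(L_1,L_0)|\xi|^2$ under \eqref{eq:2.01}. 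On the antisymmetric and pure-trace blocks the lower-order correction terms are arranged so that only the $L_1\Delta$ part survives, giving $L_1|\xi|^2$ there; and the cross terms between the three blocks vanish by parity. The upper bound $\Lambda|\xi|^2$ is immediate since $A$ is a fixed tensor.

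The main obstacle I anticipate is purely bookkeeping: writing down the sixth-order coefficient tensor $A^{k\ell}_{ij\,pq}$ explicitly, checking it is symmetric under $(k,i,?)\leftrightarrow(\ell,j,?)$ so that it defines the right quadratic form, and verifying that on the orthogonal decomposition $\R^{3\times3}=(\text{sym-traceless})\oplus(\text{antisym})\oplus(\text{trace})$ the off-diagonal interactions really cancel. This is the "fairly sophisticated anisotropic tensor of order six" alluded to in the introduction. I would organize it by first recording how each of the seven terms in $\tilde{\mathcal L}$ acts on each of the three blocks, then summing. After coercivity, the last sentence of the lemma (that $F,g\in\mathcal Q$ forces $Q\in\mathcal Q$) follows because $\mathcal Q$ is an invariant subspace of $\tilde{\mathcal L}$ on which it coincides with $\mathcal L$: apply the solvability just proved inside $\mathcal Q$, or equivalently note that $\text{tr}\,Q$ and the antisymmetric part of $Q$ solve homogeneous scalar/vector elliptic Dirichlet problems (with zero data) and hence vanish, so $\tilde{\mathcal L}Q=\mathcal L Q=F$.
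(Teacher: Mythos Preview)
Your overall plan is correct and the ``especially'' part matches the paper's argument exactly (trace and antisymmetric parts of $Q$ satisfy $L_1\Delta(\cdot)=0$ with zero Dirichlet data, hence vanish). The coercivity argument, however, is organized quite differently from the paper's.

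The paper does \emph{not} split $\xi^{(ij)}_\ell$ into symmetric-traceless/antisymmetric/trace blocks. Instead it (i) subtracts only the trace part, setting $\zeta^{(ij)}_\ell=\xi^{(ij)}_\ell-\tfrac13\delta^i_j\xi^{(kk)}_\ell$, and observes that the entire $(L_2{+}L_3)$-contribution to the quadratic form can be written in terms of $\zeta$ alone; (ii) then splits into two cases. For $L_2{+}L_3\le 0$ it applies the crude bound $\zeta^{(ij)}_\ell\zeta^{(j\ell)}_i\le|\zeta|^2\le|\xi|^2$ to each of the four cross terms, yielding the lower bound $L_0|\xi|^2$. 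For $L_2{+}L_3\ge 0$ it rewrites $\tilde{\mathcal L}$ with a \emph{different} coefficient tensor $\tilde A$ (obtained by swapping the derivative indices $k\leftrightarrow\ell$ in \eqref{eq:1.09}) and then completes a square to exhibit a manifestly nonnegative term, giving the lower bound $L_1|\xi|^2$. No curl identity and no block-orthogonality is invoked.

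Your route via the irreducible decomposition also works and in fact gives the same constant $\min(L_1,L_0)$: on the symmetric-traceless block the four cross terms collapse to $4\,\sigma^{(ij)}_\ell\sigma^{(i\ell)}_j$, and the pointwise Levi--Civita identity (not the integration-by-parts step of Lemma~\ref{lemma1}, which is a separate statement about functions with zero boundary data) gives $\sigma^{(ij)}_\ell\sigma^{(i\ell)}_j=|\sigma|^2-|\mathrm{curl}\,\sigma|^2\in[0,|\sigma|^2]$. On the antisymmetric block the four terms cancel pairwise by relabeling; on the trace block the three correction terms exactly kill the four cross terms. Two cautions: first, be explicit that it is the \emph{pointwise} identity you need, not the $H^1_0$ G{\aa}rding inequality of Lemma~\ref{lemma1}; second, your ``cross terms vanish by parity'' is true but not automatic---each of the three off-diagonal pairings requires a short index computation (for instance, the $\sigma$--$\tau$ cross produces $\tfrac{8}{3}\,d\cdot t$ from the four main terms and $-\tfrac{8}{3}\,d\cdot t$ from the trace corrections, with $d_i=\sigma^{(ij)}_j$), so spell that out rather than assert it. The trade-off: the paper's case split avoids block bookkeeping at the cost of needing two coefficient tensors, while your approach is more conceptual and handles both signs of $L_2{+}L_3$ uniformly once the orthogonality is verified.
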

\begin{proof}
It suffices to verify the strong Legendre condition (see \eqref{eq:1.26} for instance) for $\tilde{\mathcal{L}}$ and then the conclusion follows from standard theory of elliptic system (cf. \cite[Chapter IV]{AgmonDouglisNirenberg1964} or \cite[Chapter 10]{ChenWu1998} ).
To this end, we first note that  $\tilde{\mathcal{L}}$ can be written as
\begin{equation*}
\begin{split}
(\tilde{\mathcal{L}}(Q) )_{ij}=&-L_1\partial_\ell\Big(\delta^\ell_k\delta^i_{i'}\delta^j_{j'}\partial_kQ_{i'j'}\Big)
-\frac14(L_2+L_3)\partial_\ell\Big((\delta^j_{i'}\delta^k_{i}\delta^\ell_{j'}+\delta^i_{i'}\delta^\ell_{j'}\delta^k_j+\delta_k^i\delta_\ell^{i'}\delta_j^{j'}+\delta_k^j\delta_\ell^{i'}\delta_i^{j'})\partial_kQ_{i'j'}\\
&-\frac43\delta^{i'}_{j'}\delta^{i}_\ell\delta^{j}_k\partial_kQ_{i'j'}-\frac43\delta^i_j\delta^{i'}_\ell\delta^{j'}_k\partial_kQ_{i'j'}
+\frac49\delta^i_j\delta^{i'}_{j'}\delta^k_\ell\partial_kQ_{i'j'}\Big)\\
=&-\partial_\ell\Big(A^{\ell k}_{(ij)(i'j')}\partial_kQ_{i'j'}\Big),
\end{split}
\end{equation*}
where
\begin{equation}\label{eq:1.09}
\begin{split}
A^{\ell k}_{(ij)(i'j')}=&~L_1\delta^\ell_k\delta^i_{i'}\delta^j_{j'}+\frac14(L_2+L_3)
\big(\delta^j_{i'}\delta^k_{i}\delta^\ell_{j'}+\delta^i_{i'}\delta^\ell_{j'}\delta^k_j+\delta_k^i\delta_\ell^{i'}\delta_j^{j'}+\delta_k^j\delta_\ell^{i'}\delta_i^{j'}\\
&-\frac43\delta^{i'}_{j'}\delta^{i}_\ell\delta^{j}_k-\frac43\delta^i_j\delta^{i'}_\ell\delta^{j'}_k+\frac49\delta^i_j\delta^{i'}_{j'}\delta^k_\ell\big).
  \end{split}
\end{equation}
To verify the strong Legendre condition for $A^{\ell k}_{(ij)(i'j')}$, we need to compute
\begin{equation*}
  \begin{split}
    A^{\ell k}_{(ij)(i'j')}\xi_\ell^{(ij)}\xi_k^{(i'j')}
=&L_1\sum_{i,j,\ell} (\xi_\ell^{(ij)})^2+\frac14(L_2+L_3)\Big(\xi^{(ij)}_{\ell}\xi^{(j\ell)}_i+\xi^{(ij)}_{\ell}\xi^{(i\ell)}_j+\xi^{(ij)}_{\ell}\xi^{(\ell j)}_i+\xi^{(ij)}_{\ell}\xi^{(\ell i)}_j\\
&-\frac43\xi^{(ij)}_{i}\xi^{(\ell \ell)}_{j}-\frac43\xi^{(ii)}_j\xi^{(j\ell )}_\ell+\frac49\sum_\ell\big|\xi^{(ii)}_\ell\big|^2\Big).
  \end{split}
\end{equation*}
To this end,  we define a new tensor by $\zeta^{(ij)}_{\ell}=\xi^{(ij)}_{\ell}-\frac13\delta^i_j\xi^{(kk)}_\ell$. Then it is easy to verify  that
\begin{align*}
 \sum_{i,j,\ell} |\zeta^{(ij)}_\ell|^2 \le \sum_{i,j,\ell}|\xi^{(ij)}_\ell|^2.
\end{align*}
Thus we have, for the case $L_2+L_3\le 0$,
\begin{equation*}
  \begin{split}
    A^{\ell k}_{(ij)(i'j')}\xi_\ell^{(ij)}\xi_k^{(i'j')}
=&L_1 \sum_{i,j,\ell}\big|\xi_\ell ^{(ij)}\big|^2+\frac14(L_2+L_3)\Big(\zeta^{(ij)}_{\ell }\zeta^{(j\ell )}_i+\zeta^{(ij)}_{\ell}\zeta^{(i\ell)}_j+\zeta^{(ij)}_{\ell}\zeta^{(\ell j)}_i+\zeta^{(ij)}_{\ell}\zeta^{(\ell i)}_j\Big)\\
\geq &L_1 \sum_{i,j,\ell}\big|\xi_\ell^{(ij)}\big|^2+\frac14\sum_{i,j,\ell}(L_2+L_3)\Big(\big|\zeta^{(ij)}_{\ell}\big|^2+\big|\zeta^{(i\ell)}_j\big|^2+\big|\zeta^{(ij)}_{\ell}\big|^2+\big|\zeta^{(j\ell)}_i\big|^2\Big)\\
\geq &L_1 \sum_{i,j,\ell}\big|\xi_\ell^{(ij)}\big|^2+\frac14\sum_{i,j,\ell}(L_2+L_3)\Big(\big|\xi^{(ij)}_{\ell}\big|^2+\big|\xi^{(i\ell)}_j\big|^2+\big|\xi^{(ij)}_{\ell}\big|^2+\big|\xi^{(j\ell)}_i\big|^2\Big)\\
=&(L_1+L_2+L_3)\sum_{i,j,\ell}\big|\xi_\ell^{(ij)}\big|^2.
  \end{split}
\end{equation*}
In the case when $L_2+L_3\ge 0$, we can write
\begin{equation*}
  (\tilde{\mathcal{L}} (Q) )_{ij}=-\partial_\ell\Big(\tilde{A}^{\ell k}_{(ij)(i'j')}\partial_kQ_{i'j'}\Big),
\end{equation*}
where $ \tilde{A}^{\ell k}_{(ij)(i'j')}$ is defined by  interchanging $k,\ell$ in \eqref{eq:1.09}:
\begin{equation*}
\begin{split}
\tilde{A}^{\ell k}_{(ij)(i'j')}=&~L_1\delta^\ell_k\delta^i_{i'}\delta^j_{j'}+\frac14(L_2+L_3)
\big(\delta^j_{i'}\delta^\ell_{i}\delta^k_{j'}+\delta^i_{i'}\delta^k_{j'}\delta^\ell_j+\delta_\ell^i\delta_k^{i'}\delta_j^{j'}+\delta_\ell^j\delta_k^{i'}\delta_i^{j'}\\
&-\frac43\delta^{i'}_{j'}\delta^{i}_k\delta^{j}_\ell-\frac43\delta^i_j\delta^{i'}_k\delta^{j'}_\ell+\frac49\delta^i_j\delta^{i'}_{j'}\delta^k_\ell\big).
  \end{split}
\end{equation*}
This yields
\begin{equation*}
  \begin{split}
    A^{\ell k}_{(ij)(i'j')}\xi_\ell^{(ij)}\xi_k^{(i'j')}
=&L_1 \sum_{i,j,\ell}\big|\xi_\ell^{(ij)}\big|^2+\frac14(L_2+L_3)\Big(\xi^{(ij)}_{i}\xi^{(j\ell)}_\ell+\xi^{(ij)}_{j}\xi^{(i\ell)}_\ell+\xi^{(ij)}_{i}\xi^{(\ell j)}_\ell+\xi^{(ij)}_{j}\xi^{(\ell i)}_\ell\\
&-\frac43\xi^{(ij)}_{j}\xi^{(\ell\ell)}_{i}-\frac43\xi^{(ii)}_{\ell}\xi^{(j\ell)}_j+\frac49\sum_\ell\big|\xi^{(ii)}_\ell\big|^2\Big)\\
=&L_1 \sum_{i,j,\ell}\big|\xi_\ell^{(ij)}\big|^2+\frac14(L_2+L_3)\sum_{j}\Big|\xi^{(ij)}_{i}+\xi^{(ji)}_i-\frac23\xi^{(ii)}_{j}\Big|^2
\ge  L_1\sum_{i,j,\ell} \big|\xi_\ell^{(ij)}\big|^2
  \end{split}
\end{equation*}
and
we   conclude that $\tilde{\mathcal{L}}$ satisfies the strong Legendre condition in both cases.

To prove the `especially' part, we first note that    $\tilde{\mathcal{L}} (Q)={\mathcal{L}} (Q)$ for $Q\in\mathcal{Q}$.
Consequently, if $Q\in\mathcal{Q}$ solves $\mathcal{L}(Q)= F $ for some $ F $ with image in $\mathcal{Q}$, then  $\tilde{\mathcal{L}}(Q)= F $. On the other hand, if $ F \in\mathcal{Q}$, and
$Q\in\mathbb{R}^{3\times 3}$ solves $\tilde{\mathcal{L}}(Q)= F $, then we have
\begin{equation*}
L_1\Delta(Q-Q^T)=  \tilde{\mathcal{L}}(Q)-(\tilde{\mathcal{L}}(Q))^T=0,
\end{equation*}
and
\begin{equation*}
L_1\Delta(\tr Q)=\tr  \tilde{\mathcal{L}}(Q) =\tr F =0.
\end{equation*}
Thus $Q\in\mathcal{Q}$ and $\mathcal{L}(Q)= F $.
\end{proof}
The above lemma implies the following:
\begin{corollary}\label{lem:regularity-L}
The operator $\mathcal{L}:H^2(\O;\mathcal{Q})\cap H^1_0(\O;\mathcal{Q})\mapsto L^2(\O;\mathcal{Q}))$ defined by \eqref{qequ} is an isomorphism.
\end{corollary}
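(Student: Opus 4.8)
The goal is to prove Corollary \ref{lem:regularity-L}: that $\mathcal{L}:H^2(\O;\mathcal{Q})\cap H^1_0(\O;\mathcal{Q})\mapsto L^2(\O;\mathcal{Q})$ is an isomorphism. The plan is to deduce everything from Lemma \ref{lem:estimate-L} with $p=2$ together with Lemma \ref{lemma1}, so that essentially no new work is required beyond unwinding the statements.

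First I would establish that $\mathcal{L}$ is well-defined and bounded on the stated space: for $Q\in H^2(\O;\mathcal{Q})$ the right-hand side of \eqref{qequ} lies in $L^2(\O)$, and since $\mathcal{L}(Q)$ is a linear combination of second derivatives of $Q$, the trace and symmetry computations already carried out at the end of the proof of Lemma \ref{lem:estimate-L} (namely $L_1\Delta(Q-Q^T)=\mathcal{L}(Q)-(\mathcal{L}(Q))^T$ and $L_1\Delta(\tr Q)=\tr\mathcal{L}(Q)$, using that $Q\in\mathcal{Q}$) show $\mathcal{L}(Q)\in L^2(\O;\mathcal{Q})$. Boundedness $\|\mathcal{L}(Q)\|_{L^2}\le C\|Q\|_{H^2}$ is immediate from the explicit formula.

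Next, \textbf{injectivity}: if $Q\in H^2(\O;\mathcal{Q})\cap H^1_0(\O;\mathcal{Q})$ satisfies $\mathcal{L}(Q)=0$, then $a(Q,\Phi)=\scp{\mathcal{L}(Q)}{\Phi}_{H^{-1},H^1_0}=0$ for all $\Phi\in H^1_0(\O;\mathcal{Q})$ (the identification of $a$ with $\mathcal{L}$ is the one noted in the proof of Lemma \ref{lemma1}); testing with $\Phi=Q$ and invoking the coercivity estimate \eqref{yuning:lowerbd1} forces $\nabla Q=0$, hence $Q=0$ by the zero boundary condition. \textbf{Surjectivity}: given $F\in L^2(\O;\mathcal{Q})$, apply Lemma \ref{lem:estimate-L} with $p=2$ and $g=0$; since $F\in\mathcal{Q}$ and $g\in\mathcal{Q}$, the ``especially'' clause of that lemma produces $Q\in W^{2,2}(\O;\mathcal{Q})=H^2(\O;\mathcal{Q})$ with $\mathcal{L}(Q)=F$ and $Q|_{\p\O}=0$, i.e. $Q\in H^1_0(\O;\mathcal{Q})$. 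Thus $\mathcal{L}$ is a continuous linear bijection. Finally, continuity of the inverse is exactly the \emph{a priori} estimate $\|Q\|_{W^{2,2}(\O)}\le C\|F\|_{L^2(\O)}$ furnished by Lemma \ref{lem:estimate-L} (with $g=0$), so $\mathcal{L}^{-1}$ is bounded and $\mathcal{L}$ is an isomorphism; alternatively one could simply cite the bounded inverse theorem once bijectivity and boundedness of $\mathcal{L}$ are in hand.

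There is no real obstacle here: the corollary is a bookkeeping consequence of the lemma. The only point requiring a little care is the passage between $\tilde{\mathcal{L}}$ (for which the Agmon--Douglis--Nirenberg elliptic theory was applied, since it satisfies the strong Legendre condition) and $\mathcal{L}$ itself on the constrained space $\mathcal{Q}$ — but this reduction is precisely what the ``especially'' paragraph of Lemma \ref{lem:estimate-L} accomplished, so I would just quote it. One should also make sure the domain $H^2(\O;\mathcal{Q})\cap H^1_0(\O;\mathcal{Q})$ is given its natural Hilbert-space norm (the $H^2$ norm), under which both $\mathcal{L}$ and $\mathcal{L}^{-1}$ are then bounded.
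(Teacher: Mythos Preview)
Your proposal is correct and matches the paper's approach: the paper gives no separate proof for this corollary, simply stating that ``the above lemma implies the following,'' and your argument is exactly the natural unpacking of that implication via Lemma~\ref{lem:estimate-L} with $p=2$, $g=0$. One minor remark: injectivity already follows from the uniqueness assertion in Lemma~\ref{lem:estimate-L}, so invoking the coercivity of Lemma~\ref{lemma1} is not strictly necessary (though it is a perfectly valid alternative).
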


\section{Abstract form of the system}\label{fasetup}
The task of this section is to setup the functional analytic framework for \eqref{yuning:be} and \eqref{yuning:BE-ini-bou}.
We first remark that the Beris-Edward system \eqref{yuning:be} obeys the basic energy dissipation law
\begin{equation}\label{eq:energy-law}
  \frac{d}{dt}\(\mathcal{F}( Q,\nabla Q )+\int_{\Omega} \frac 12| u|^2\ud x\)+\int_{\Omega}\( |\nabla u|^2+\left| \mathcal{H}(Q)\right|^2\)\ud x=0.
\end{equation}
This can be formally   done by  first testing  equation \eqref{eq:BE-v} by the velocity field $u$ and  testing \eqref{eq:BE-Q} by $ \mathcal{H}(Q)$ in \eqref{eq:1.11}, then simple  integration by parts lead to:
\begin{equation}\label{final1}
\begin{split}
&\frac{d}{dt}\int_{\Omega} \frac 12|u|^2\ud x+\int_{\Omega} |\nabla u|^2\ud x\\
=&-\int_{\Omega} \(-\mathcal{S}_Q ( \mathcal{H}  )
+ Q \cdot \mathcal{H}  - \mathcal{H}  \cdot Q \):\nabla u \ud x-\int_{\Omega} \partial_j\(\frac{\partial \mathcal{F}}{\partial Q_{k\ell,j}}Q_{k\ell,i}\)u_i\ud x,
\end{split}
\end{equation}
and
\begin{equation}\label{final2}
  \begin{split}
  &-\frac{d}{dt}\mathcal{F}( Q,\nabla Q )+\int_{\Omega}  u\cdot\nabla Q : \mathcal{H} (Q) \ud x\\
  =&\int_{\Omega}\(| \mathcal{H} (Q) |^2+\big(\mathcal{S}_Q (D(u))+W(u)\cdot  Q - Q \cdot W(u)\big): \mathcal{H}\)\ud x .
  \end{split}
\end{equation}
Since we have
\begin{align*}
\int_{\Omega} \partial_j\(\frac{\partial \mathcal{F}}{\partial Q_{k\ell,j}}Q_{k\ell,i}\)u_i\ud x
=&\int_{\Omega} \(\partial_j\(\frac{\partial \mathcal{F}}{\partial Q_{k\ell,j}}\)Q_{k\ell,i}+\frac{\partial \mathcal{F}}{\partial Q_{k\ell,j}}Q_{k\ell,ij}\)u_i\ud x\\
=&\int_{\Omega} \(\mathcal{H}_{k\ell}(Q)Q_{k\ell,i}+\frac{\partial \mathcal{F}}{\partial Q_{k\ell}}Q_{k\ell,i}+\frac{\partial \mathcal{F}}{\partial Q_{k\ell,j}}Q_{k\ell,ij}\)u_i\ud x\\
=&\int_{\Omega} \(\mathcal{H}_{k\ell}(Q)Q_{k\ell,i}+\partial_i \mathcal{F}(Q,\nabla Q)\)u_i\ud x\\
=&\int_{\Omega} \mathcal{H}_{k\ell}(Q)Q_{k\ell,i}u_i\ud x,
\end{align*}
thus subtracting \eqref{final1} from \eqref{final2}  and using the following cancellation yields (\ref{eq:energy-law}).

The following lemma indicates the important cancellation law between nonlinear terms:
\begin{lemma}
For any $Q\in\mathcal{Q}$, the linear operator $\mathcal{S}_Q ( M)$ defined by \eqref{eq:1.15} is    symmetric  and traceless.
Moreover, for any $3\times 3$ matrices $P$ and $M$, it holds
\begin{equation}\label{yuning:cal1'}
\mathcal{S}_Q( M):P  =\mathcal{S}_Q(P):M.
\end{equation}
In addition, if $M$ is symmetric, then
\begin{equation}\label{yuning:cal1}
\( -\mathcal{S}_Q( M)+ Q \cdot M- M\cdot Q \):P
  =\(-\mathcal{S}_Q(S)+ Q \cdot A-A\cdot Q \): M,
\end{equation}
where $S$ and $A$ are symmetric and anti-symmetric parts of $P$ respectively.
\end{lemma}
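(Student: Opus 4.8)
The plan is to reduce every assertion to elementary trace identities after rewriting $\mathcal{S}_Q$ in a convenient form. Set $\tilde Q := Q+\frac13 I_3$, which is symmetric since $Q\in\mathcal{Q}$, and write $M_s:=\frac12(M+M^T)$ for the symmetric part of $M$. Because $\tilde Q$ is symmetric, $\tilde Q:M=\tilde Q:M_s$, so \eqref{eq:1.15} reads
$$\mathcal{S}_Q(M)=\xi\bigl(M_s\tilde Q+\tilde Q M_s-2(\tilde Q:M_s)\,\tilde Q\bigr),$$
and in particular $\mathcal{S}_Q(M)=\mathcal{S}_Q(M_s)$, i.e. $\mathcal{S}_Q$ sees only the symmetric part of its argument. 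This normalization is used repeatedly below.

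First I would dispose of the pointwise claims. Transposing the displayed identity and using $\tilde Q^T=\tilde Q$ and $(M_s\tilde Q)^T=\tilde Q M_s$ gives $\mathcal{S}_Q(M)^T=\mathcal{S}_Q(M)$. For the trace, $\tr(M_s\tilde Q)=\tr(\tilde Q M_s)=M_s:\tilde Q$ and $\tr\tilde Q=\tr Q+1=1$, so $\tr\mathcal{S}_Q(M)=\xi\bigl(2\,M_s:\tilde Q-2(\tilde Q:M_s)\bigr)=0$.

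Next, for \eqref{yuning:cal1'}: since $M_s\tilde Q+\tilde Q M_s$ and $\tilde Q$ are symmetric, pairing $\mathcal{S}_Q(M)$ with $P$ only picks up the symmetric part $P_s$ of $P$, hence $\mathcal{S}_Q(M):P=\xi\bigl[(M_s\tilde Q+\tilde Q M_s):P_s-2(\tilde Q:M_s)(\tilde Q:P_s)\bigr]$. The last product is visibly symmetric under $M\leftrightarrow P$. For the first term I would use $A:B=\tr(AB^T)$, the symmetry of $P_s$, $M_s$, $\tilde Q$, and cyclic/transpose invariance of the trace to get $(M_s\tilde Q):P_s=\tr(M_s\tilde Q P_s)=\tr(P_s\tilde Q M_s)=(P_s\tilde Q):M_s$ and similarly $(\tilde Q M_s):P_s=(\tilde Q P_s):M_s$; adding these yields $\mathcal{S}_Q(M):P=\mathcal{S}_Q(P):M$.

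Finally, for \eqref{yuning:cal1} with $M$ symmetric and $P=S+A$ its symmetric/antisymmetric decomposition: by \eqref{yuning:cal1'} and $\mathcal{S}_Q(P)=\mathcal{S}_Q(S)$ one has $\mathcal{S}_Q(M):P=\mathcal{S}_Q(P):M=\mathcal{S}_Q(S):M$, which matches the $\mathcal{S}_Q$-terms on the two sides. It remains to check $(QM-MQ):P=(QA-AQ):M$. For symmetric $M$ and symmetric $Q$ the commutator $QM-MQ$ is antisymmetric, so it annihilates $S$ and $(QM-MQ):P=(QM-MQ):A=-\tr(QMA)+\tr(MQA)$; on the other side $(QA-AQ):M=\tr(QAM)-\tr(AQM)$, and these coincide because $\tr(QMA)=\tr(AQM)$ and $\tr(MQA)=\tr(QAM)$ by cyclicity. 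Combining the two computations proves \eqref{yuning:cal1}. I expect no real obstacle; the only care needed is bookkeeping of symmetry types — that $\mathcal{S}_Q(M)=\mathcal{S}_Q(M_s)$, that $\mathcal{S}_Q(M)$ is symmetric, and that $QM-MQ$ is antisymmetric when $M$ is symmetric — so that in each contraction only the admissible part of the test matrix survives, after which everything is trace cyclicity.
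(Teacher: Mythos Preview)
Your proof is correct and follows essentially the same route as the paper: exploit the orthogonal decomposition of $\R^{3\times 3}$ into symmetric and antisymmetric parts, observe that $\mathcal{S}_Q(M)$ is symmetric while $QM-MQ$ is antisymmetric (for symmetric $M$), and reduce everything to trace cyclicity together with \eqref{yuning:cal1'}. The paper's version is terser (it merely says ``direct calculations'' for \eqref{yuning:cal1'} and writes the decomposition step for \eqref{yuning:cal1} in three lines), but the logic and the key observations are identical to yours.
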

\begin{proof}
Note that the space consisting of all $k$ by $k$ matrices under the Frobenius product $A:B= \tr AB^T =  A_{ij}B_{ij}$ is a Hilbert space and it allows the following direct product decomposition:
\begin{equation*}
\mathbb{R}^{n\times n}=\{M\in \mathbb{R}^{n\times n}, M_{ij}=M_{ji}\}\oplus \{M\in \mathbb{R}^{n\times n}, M_{ij}=-M_{ji}\}.
\end{equation*}
Actually, any   $M$ can be uniquely written as the sum of two orthogonal parts $$M=\frac{M+M^T}2+\frac{M-M^T}2.$$
Then direct calculations implies the  identity  \eqref{yuning:cal1'}. Formula \eqref{yuning:cal1} is trickier and can be proved using \eqref{yuning:cal1'}:
  \begin{equation*}
\begin{split}
   &( -\mathcal{S}_Q( M)+ Q \cdot M- M^T\cdot Q ): P\\
  =&  ( -\mathcal{S}_Q( M)+ Q \cdot M- M^T\cdot Q ):(S+A)\\
  =&-\mathcal{S}_Q( M):S+( Q \cdot M- M^T\cdot Q ):A\\
  =&-\mathcal{S}_Q(S): M+( Q \cdot A-A \cdot Q ): M.
\end{split}
\end{equation*}

\end{proof}

The typical situation for the application of \eqref{yuning:cal1} is when $P=\nabla u=D(u)+W(u)$ for some   vector field $u$:
\begin{equation*}
\Big( -\mathcal{S}_Q( M)+ Q \cdot M- M\cdot Q \Big):\nabla u
  =\Big(-\mathcal{S}_Q(D(u))+ Q \cdot W(u)-W(u)\cdot Q \Big): M,\quad \forall Q,M\in\mathcal{Q}.
\end{equation*}

Let  $(u_0,Q_0)\in \Zspace$, defined by \eqref{init}. As usual,  the first equation in \eqref{yuning:be} will be formulated   by testing with divergence-free vector fields, or equivalently, by applying the Leray's projector
\begin{equation} \label{pressure}
\left\{\begin{array}{rl}
  &u_t=P_\sigma \div \( -u\otimes u + \sigma^{s}(u,Q)+\sigma^{a}(Q)+\sigma^d(Q) \),\quad \\
&Q_t+u\cdot\nabla  Q  + Q \cdot W(u)-W(u)\cdot Q =  \mathcal{H}(Q)  +\mathcal{S}_Q (D(u)),
\end{array}
\right.
\end{equation}
where $P_\sigma\colon H^{-1}( \Omega ;\R^3)\mapsto H^{-1}_\sigma( \Omega )$ and $\div\colon L^2( \Omega ;\R^{3\times 3})\mapsto H^{-1}( \Omega ;\R^3)$ are defined  in Section~\ref{sec:Notation}.
The idea is to rewrite the nonlinear system~\bref{pressure} as an
abstract evolutionary  equation in a suitable Banach space. With the notation introduced in Section \ref{sec:Notation},
we define the linearized operator at the initial director field $ Q_0 $ by
\begin{equation}\label{linop}
\mathscr{L}_{ Q_0 }\begin{pmatrix} u\\ Q \end{pmatrix}:=
\frac{d}{dt}\begin{pmatrix} u\\ Q \end{pmatrix}
-\begin{pmatrix}
  P_{\sigma}\div\left[ D( u)+\mathcal{S}_{ Q _0}(\mathcal{L}( Q ))
- Q _0\cdot \mathcal{L}( Q )+\mathcal{L}( Q )\cdot Q _0\right]\\
-\mathcal{L}( Q )+\mathcal{S}_{ Q _0}( D( u))- Q _0\cdot W(u)+W(u)\cdot Q _0
\end{pmatrix}\,
\end{equation}
and nonlinear part is given by
 \begin{equation}\label{yuning:nonlinear1}
  \begin{split}
   \tilde{ \mathscr{N}}_{ Q _0}\begin{pmatrix}  u\\  Q \end{pmatrix}&:=
\begin{pmatrix}
  P_{\sigma}\div\left[ \mathcal{S}_Q(\mathcal{J}( Q ))
- Q \cdot\mathcal{J}( Q )+\mathcal{J}( Q )\cdot Q - u\otimes u-\frac{\partial\mathcal{F}}
{\partial\nabla Q }:\nabla Q \right]\\
-\mathcal{J}( Q )- u\cdot\nabla Q
\end{pmatrix}\\
&+
\begin{pmatrix}
  P_{\sigma}\div\left[\mathcal{S}_Q(\mathcal{L}( Q ))-\mathcal{S}_{ Q _0}(\mathcal{L}( Q ))
  -( Q - Q _0)\cdot\mathcal{L}( Q )+\mathcal{L}( Q )\cdot( Q - Q _0)\right]\\
\mathcal{S}_Q( D( u))-\mathcal{S}_{ Q _0}(D( u))+( Q _0- Q )\cdot W(u)
-W(u)\cdot( Q _0- Q )
\end{pmatrix}.
  \end{split}
\end{equation}
So if $(u,Q)$ is a solution to    \eqref{yuning:be} satisfying  initial-boundary conditions \eqref{yuning:BE-ini-bou}, then $(u_h,Q_h)=(u-u_0,Q-Q_0)$ satisfies the  operator equation
\begin{equation}\label{oper3}
  \mathscr{L}_{Q_0}\begin{pmatrix} \uh+u_0\\ \Qh+Q_0 \end{pmatrix}
=\tilde{\mathscr{N}}_{Q_0}\begin{pmatrix} \uh+u_0\\ \Qh+Q_0 \end{pmatrix}\,,
 \end{equation}
as well as the  homogeneous initial-boundary conditions. 	

Due to the inhomogenous boundary conditions, the operator $\tilde{\mathscr{N}}_{Q_0}$    is defined on an affine space.
For the purpose  of applying classical result in functional analysis and operator theory, we shall rewrite it as an nonlinear operator between two Banach spaces. To this end, we   denote the stationary version of \eqref{linop} by
\begin{equation*}
\mathscr{S}_{ Q_0 }\begin{pmatrix} u\\ Q \end{pmatrix}:=\begin{pmatrix}
  P_{\sigma}\div\left[ D( u)+\mathcal{S}_{ Q _0}(\mathcal{L}( Q ))
- Q _0\cdot \mathcal{L}( Q )+\mathcal{L}( Q )\cdot Q _0\right]\\
-\mathcal{L}( Q )+\mathcal{S}_{ Q _0}( D( u))- Q _0\cdot W(u)+W(u)\cdot Q _0
\end{pmatrix}\,.
\end{equation*}
Then it follows from the linearity of \eqref{linop} and the assumption that $(u_0,Q_0)$ is time-independent that, equation \eqref{oper3} is   equivalent to
\begin{equation}\label{trans}
  \mathscr{L}_{Q_0}
\begin{pmatrix} \uh\\ \Qh \end{pmatrix}=
\tilde{\mathscr{N}}_{Q_0}\begin{pmatrix} \uh+u_0\\ \Qh+Q_0 \end{pmatrix}+
\mathscr{S}_{Q_0}\begin{pmatrix} u_0\\ Q_0 \end{pmatrix}.
\end{equation}
The right-hand side of \eqref{trans} is a translated version of \eqref{yuning:nonlinear1} and is a mapping between   linear spaces rather than  affine spaces:
\begin{equation} \label{newnon}
   \mathscr{N }_{(u_0,Q_0)}\begin{pmatrix}
\uh\\
\Qh
\end{pmatrix}:=\tilde{\mathscr{N}}_{Q_0}\begin{pmatrix}
\uh+u_0\\
\Qh+Q_0
\end{pmatrix}+ \mathscr{S}_{Q_0}\begin{pmatrix}
u_0\\
Q_0
\end{pmatrix}.
\end{equation}
So we end up with the following abstract parabolic system  that is equivalent to \eqref{oper3}:
\begin{equation}\label{eq:1.06}
  \mathscr{L}_{Q_0}\begin{pmatrix} \uh \\ \Qh  \end{pmatrix}
=\mathscr{N }_{(u_0,Q_0)}\begin{pmatrix}
\uh\\
\Qh
\end{pmatrix}\,.
\end{equation}

To incorporate the initial-boundary condition \eqref{yuning:BE-ini-bou},  we  turn to the definition of functional spaces $X_0$ and $Y_0$ such that
$\mathscr{L}_{Q_0},\,\mathscr{N }_{(u_0,Q_0)}:X_0\mapsto Y_0$  with  $\mathscr{L}_{Q_0}$ being  an isomorphism. Motivated by the idea to construct solutions which are
twice differentiable in time and
the precise assertions in Theorem~\ref{thm:localstrong},
  we need to prove the existence of regular solutions of the linear equation
\begin{equation*}
  \mathscr{L}_{Q_0}\begin{pmatrix}
  \uh\\
  \Qh
\end{pmatrix} =\begin{pmatrix}
   f\\
   g
 \end{pmatrix}
\end{equation*}
subject to
homogeneous initial data
 with right-hand side $(f,g)\in Y_0$. The general linear theory requires a compatibility condition
which is taken care of by the definition of $Y_0$ as
\begin{align} \label{Y0}
  Y_0&=\left\{ (f,g)\in H^1(0,T;\VNSprimeO)\times H^1(0,T;L^2(\Omega;\mathcal{Q})):
(  f,  g)|_{t=0}\in \HNSO
\times H^1_0(\Omega) \right\}\,.
\end{align}
These spaces are equipped with the usual norms in product spaces and for spaces
of functions of one variable with values in a Banach space together with the
correct norm of the initial data.
More precisely, the norm of $Y_0$ is given by
\begin{equation*}
  \|(f,g)\|^2_{Y_0}
= \|f\|_{H^1(0,T;\VNSprimeO)   }^2+\| g\|^2_{H^1(0,T;L^2(\Omega))}+\|(f,g)|_{t=0}\|_{\HNSO\times H^1(\Omega)}^2 \,.
\end{equation*}
Note that the last part of the norm is not
controlled by applying trace theorems  to the first two parts.
Now we turn to the domain of $\mathscr{L}_{Q_0}$:
\begin{equation*}
  \begin{split}
    X_u&=H^2(0,T;\VNSprimeO) \cap H^1(0,T;\VNSO),\\
     X_Q&=H^2(0,T;L^2(\Omega;\mathcal{Q}))\cap H^1(0,T;H^2(\Omega;\mathcal{Q})),
  \end{split}
\end{equation*}
together with the norms
\begin{equation*}
\begin{split}
 \|u\|^2_{X_u}&= \|u\|_{H^2(0,T;\VNSprimeO) }^2+\|u\|_{H^1(0,T;\VNSO)}^2+\|u|_{t=0}
  \|_{\VNSO}^2+\|u_t|_{t=0}
  \|_{L^2(\Omega)}^2 \,,
\\ \label{XQ}
  \|Q\|_{X_Q}^2&= \|Q\|_{H^2(0,T;L^2(\Omega;\mathcal{Q}))}^2
  +\|Q\|_{H^1(0,T;H^2(\Omega;\mathcal{Q}))}^2+\|Q|_{t=0}\|_{ H^2(\Omega)}^2+\|\dt{Q}|_{t=0}\|_{ H^1(\Omega)}^2 \,.
\end{split}
\end{equation*}
Note that the last two terms in the norms are important to obtain in the sequel constants that are uniformly bounded as $T\to 0$.
The corresponding subspaces related to the homogeneous initial and boundary conditions in
the formulation of the problem are defined by
\begin{equation*}
X_0= \left\{(u,Q)\in X_u\times X_Q\mid Q|_{\p \Omega}=0,\left(u,Q \right)|_{t=0}
=\left(0,0 \right)\right\},
\end{equation*}
which is equipped with the product norm
\begin{equation*}
  \|(u,Q)\|_{X_0}=\|(u,Q)\|_{X_u\times X_Q}\,.
\end{equation*}

  \begin{prop}\label{lem5}
   If $(\uh,\Qh)$ is a strong solution to  \eqref{eq:1.06}, then $(u,Q)= (\uh,\Qh)+(u_0,Q_0)$ is a solution to \eqref{yuning:be}   and \eqref{eq:BE-ini}.
    \end{prop}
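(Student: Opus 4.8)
The plan is to unravel the chain of algebraic equivalences that were established in Section \ref{fasetup} and run them backwards. Recall that $(\uh,\Qh)$ solving \eqref{eq:1.06} means precisely
\[
\mathscr{L}_{Q_0}\begin{pmatrix}\uh\\ \Qh\end{pmatrix}=\mathscr{N}_{(u_0,Q_0)}\begin{pmatrix}\uh\\ \Qh\end{pmatrix},
\]
and by the very definition \eqref{newnon} of $\mathscr{N}_{(u_0,Q_0)}$ this is the same as
\[
\mathscr{L}_{Q_0}\begin{pmatrix}\uh\\ \Qh\end{pmatrix}=\tilde{\mathscr{N}}_{Q_0}\begin{pmatrix}\uh+u_0\\ \Qh+Q_0\end{pmatrix}+\mathscr{S}_{Q_0}\begin{pmatrix}u_0\\ Q_0\end{pmatrix},
\]
i.e. exactly equation \eqref{trans}. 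First I would invoke the fact that $\mathscr{L}_{Q_0}$ is an affine-linear operator (it is $\tfrac{d}{dt}$ minus the linear $\mathscr{S}_{Q_0}$) and that $(u_0,Q_0)$ is time-independent, so that $\mathscr{L}_{Q_0}\big((\uh,\Qh)+(u_0,Q_0)\big)=\mathscr{L}_{Q_0}(\uh,\Qh)+\big(0,0\big)-\mathscr{S}_{Q_0}(u_0,Q_0)$; adding $\mathscr{S}_{Q_0}(u_0,Q_0)$ to both sides of \eqref{trans} then yields \eqref{oper3}, namely
\[
\mathscr{L}_{Q_0}\begin{pmatrix}\uh+u_0\\ \Qh+Q_0\end{pmatrix}=\tilde{\mathscr{N}}_{Q_0}\begin{pmatrix}\uh+u_0\\ \Qh+Q_0\end{pmatrix}.
\]

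Next I would set $(u,Q):=(\uh,\Qh)+(u_0,Q_0)$ and unfold the definitions \eqref{linop} of $\mathscr{L}_{Q_0}$ and \eqref{yuning:nonlinear1} of $\tilde{\mathscr{N}}_{Q_0}$. The point is that the splitting into a "linearized at $Q_0$" piece and a nonlinear remainder was purely a bookkeeping device: all the $\mathcal{S}_{Q_0}(\mathcal{L}(Q))$, $Q_0\cdot\mathcal{L}(Q)$, $\mathcal{S}_{Q_0}(D(u))$, $Q_0\cdot W(u)$ terms in $\mathscr{L}_{Q_0}$ are cancelled by the correction terms in the second matrix of \eqref{yuning:nonlinear1}, leaving only $\mathcal{S}_Q(\mathcal{L}(Q))$, $Q\cdot\mathcal{L}(Q)$, $\mathcal{S}_Q(D(u))$, $Q\cdot W(u)$. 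Combining these with the $\mathcal{J}(Q)$-terms and using $\mathcal{H}(Q)=-\mathcal{L}(Q)-\mathcal{J}(Q)$ from \eqref{eq:1.11}, together with $\sigma^s$, $\sigma^a$, $\sigma^d$ and the identity $\sigma^d(Q)=-\big(\tfrac{\partial\mathcal F}{\partial Q_{k\ell,j}}Q_{k\ell,i}\big)$, one recovers exactly the right-hand sides of \eqref{pressure}. Thus \eqref{oper3} is equivalent to \eqref{pressure}, i.e. to the Leray-projected form of \eqref{yuning:be}.

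Then I would argue that \eqref{pressure} implies \eqref{eq:BE-v}, recovering the pressure. Since $u_t-P_\sigma\div(\cdots)=0$ in $H^{-1}_\sigma(\Omega)$, the field $u_t-\div(-u\otimes u+\sigma^s+\sigma^a+\sigma^d)\in H^{-1}(\Omega;\R^3)$ annihilates all divergence-free test fields, hence by the standard characterization of such functionals (de Rham's theorem / the Helmholtz decomposition recalled in Section \ref{sec:Notation}) it equals $\nabla P$ for some $P$ (with the regularity asserted in \eqref{eq:1.27}, $\nabla P$ is genuinely a function, obtained a posteriori from the Stokes theory of Proposition \ref{lem1}). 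This gives \eqref{eq:BE-v}; the constraint $\div u=0$ is built into the space $\VNSO$; and \eqref{eq:BE-Q} is the second line of \eqref{pressure} verbatim. Finally, the membership $(\uh,\Qh)\in X_0$ forces $(\uh,\Qh)|_{t=0}=(0,0)$, hence $(u,Q)|_{t=0}=(u_0,Q_0)$, which is \eqref{eq:BE-ini}. Assembling these steps proves the proposition.

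There is no real obstacle here; the statement is essentially a tautology unwinding the constructions of Section \ref{fasetup}, and the only point requiring a little care is the de Rham step that produces the pressure $P$ from the Leray-projected equation, together with checking that the regularity class \eqref{eq:1.27} makes all the manipulations (integration by parts defining $\sigma^d$, the identity for $\int_\Omega\partial_j(\cdots)u_i$) legitimate rather than merely formal. Accordingly I would phrase the proof as: "Unravelling \eqref{newnon}, \eqref{trans}, \eqref{oper3} and the definitions of $\mathscr{L}_{Q_0}$, $\tilde{\mathscr{N}}_{Q_0}$, $\mathscr{S}_{Q_0}$, equation \eqref{eq:1.06} for $(\uh,\Qh)$ is equivalent to \eqref{pressure} for $(u,Q)=(\uh,\Qh)+(u_0,Q_0)$; applying the Helmholtz decomposition to recover the pressure yields \eqref{eq:BE-v}, and the homogeneous initial condition in $X_0$ gives \eqref{eq:BE-ini}."
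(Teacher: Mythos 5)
Your proposal is correct and follows the only natural approach: the paper in fact gives \emph{no} proof for Proposition~\ref{lem5} at all, evidently regarding it as an immediate consequence of the constructions in Section~\ref{fasetup}, so what you have done is simply make that unwinding explicit. The chain \eqref{eq:1.06} $\Leftrightarrow$ \eqref{trans} $\Leftrightarrow$ \eqref{oper3} is exactly what the paper established (in the forward direction) when introducing the abstract formulation, and your cancellation of the $Q_0$-terms between $\mathscr{L}_{Q_0}$ and the second bracket of $\tilde{\mathscr{N}}_{Q_0}$, followed by the reassembly via $\mathcal{H}=-\mathcal{L}-\mathcal{J}$, is the correct algebra. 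You are also right to single out the de~Rham/Helmholtz step as the one non-tautological ingredient needed to pass from the Leray-projected form \eqref{pressure} back to \eqref{eq:BE-v} with a genuine pressure, and to note that the homogeneous initial datum encoded in $X_0$ delivers \eqref{eq:BE-ini}.
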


\section{Proof of Theorem \ref{thm:localstrong}}\label{section3}

The first step towards  the proof of the local in time existence of strong solutions  is to construct a regular in time solution, following the method in  \cite{AbelsDolzmannLiu2014}.
The following result establishes the invertibility of the linear operator equation.
Note that we are seeking a solution of the linear equation in $X_0$, i.e., a
solution with homogeneous initial and boundary conditions.
\begin{prop}
\label{linearizedprop}
For any fixed $T\in (0,1]$, $\mathscr{L}_{Q_0}$ defined by \eqref{linop} is a   bounded linear operator $X_0\mapsto Y_0$  and
for every $(f,g)\in Y_0$, the operator equation
\begin{equation}\label{eq:1.10}
   \mathscr{L}_{Q_0} (u,Q) = (f,g)
\end{equation}
has a unique solution $(u,Q)\in X_0$ satisfying
\begin{equation}\label{stronginverse}
\| \mathscr{L}^{-1}_{Q_0}(f,g)\|_{X_0}=
\|(u,Q)\|_{X_0}\leq C_{\mathscr{L}}\|(f,g)\|_{Y_0},
\end{equation}
where $C_{\mathscr{L}}$ is independent of $T\in(0,1]$.
In particular $\mathscr{L}_{Q_0}:X_0\mapsto Y_0$ is invertible and
$\mathscr{L}^{-1}_{Q_0}$ is a bounded linear operator with norm independent of $T\in(0,1]$.
\end{prop}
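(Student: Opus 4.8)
The plan is to recast \eqref{eq:1.10} as an abstract parabolic equation in a Gelfand triple and apply Proposition \ref{Wlokaregularity}. Boundedness of $\mathscr{L}_{Q_0}\colon X_0\to Y_0$ is routine: for $(u,Q)\in X_0$ one has $u(0)=0$, $Q(0)=0$, hence with $(f,g):=\mathscr{L}_{Q_0}(u,Q)$ the values $f(0)=\dt u(0)$ and $g(0)=\dt Q(0)$ lie in $\HNSO$ and $H^1_0(\O)$ by the interpolation traces $X_u\hookrightarrow C([0,T];\HNSO)$ (applied to $\dt u$) and $X_Q\hookrightarrow C([0,T];H^1(\O))$ (applied to $\dt Q$, together with $\dt Q|_{\p\O}=0$), while all remaining terms are estimated term by term using $Q_0\in H^2(\O)\hookrightarrow L^\infty(\O)$ and $\|\div G\|_{\VNSprimeO}\le\|G\|_{L^2(\O)}$. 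For the invertibility, introduce the Gelfand triple $\VWloka\hookrightarrow\HWloka\hookrightarrow\VWloka'$ with
\[
\HWloka=\HNSO\times H^1_0(\O;\mathcal{Q}),\qquad
\VWloka=\VNSO\times\bigl(H^2(\O;\mathcal{Q})\cap H^1_0(\O;\mathcal{Q})\bigr),
\]
where $\HWloka$ carries the \emph{weighted} inner product $\bigl\langle(u,Q),(v,R)\bigr\rangle_\HWloka=\int_\O u\cdot v\,\ud x+a(Q,R)$, with $a(\cdot,\cdot)$ the form \eqref{yuning:bilinear}; by Lemma \ref{lemma1} this is equivalent to the standard norm of $\HNSO\times H^1_0(\O)$, and $\VWloka\hookrightarrow\HWloka$ is injective, continuous and dense.

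The reason for the weight is as follows. Pairing the $u$-component of $\mathscr{L}_{Q_0}(u,Q)=(f,g)$ with $v\in\VNSO$ and the $Q$-component with $\mathcal{L}(R)$ for $R\in H^2(\O;\mathcal{Q})\cap H^1_0(\O;\mathcal{Q})$ (both legitimate since $\mathcal{L}(Q),\mathcal{L}(R)\in L^2$), and using $\int_\O\dt Q:\mathcal{L}(R)\,\ud x=a(\dt Q,R)$, the time-derivative terms combine into $\langle\dt{y},z\rangle_\HWloka$ with $y=(u,Q)$, $z=(v,R)$. This rewrites \eqref{eq:1.10} as $\langle\dt{y},z\rangle_{\VWloka',\VWloka}+\mathscr{A}(y,z)=\langle\mathcal{G},z\rangle_{\VWloka',\VWloka}$ for all $z\in\VWloka$, where $\langle\mathcal{G},(v,R)\rangle:=\langle f,v\rangle+\int_\O g:\mathcal{L}(R)\,\ud x$ and
\[
\begin{split}
\mathscr{A}\bigl((u,Q),(v,R)\bigr)
&=\int_\O D(u):D(v)\,\ud x+\int_\O\mathcal{L}(Q):\mathcal{L}(R)\,\ud x\\
&\quad+\int_\O\bigl[\mathcal{S}_{Q_0}(\mathcal{L}(Q))-Q_0\,\mathcal{L}(Q)+\mathcal{L}(Q)\,Q_0\bigr]:\nabla v\,\ud x\\
&\quad-\int_\O\bigl[\mathcal{S}_{Q_0}(D(u))-Q_0\,W(u)+W(u)\,Q_0\bigr]:\mathcal{L}(R)\,\ud x.
\end{split}
\]
Boundedness of $\mathscr{A}$ on $\VWloka\times\VWloka$ (with constant depending on $\|Q_0\|_{H^2(\O)}$) follows from Cauchy--Schwarz and $Q_0\in L^\infty$. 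The decisive point is that, taking $z=y$, the third-order cross terms \emph{cancel}: applying \eqref{yuning:cal1} with $M=\mathcal{L}(Q)$ (symmetric and traceless), $Q_0$ in place of $Q$, and $P=\nabla u=D(u)+W(u)$ gives
\[
\int_\O\bigl[\mathcal{S}_{Q_0}(\mathcal{L}(Q))-Q_0\,\mathcal{L}(Q)+\mathcal{L}(Q)\,Q_0\bigr]:\nabla u\,\ud x
=\int_\O\bigl[\mathcal{S}_{Q_0}(D(u))-Q_0\,W(u)+W(u)\,Q_0\bigr]:\mathcal{L}(Q)\,\ud x,
\]
so that $\mathscr{A}(y,y)=\int_\O|D(u)|^2\,\ud x+\int_\O|\mathcal{L}(Q)|^2\,\ud x$. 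By Korn's and Poincar\'e's inequalities on $\VNSO$, and by Corollary \ref{lem:regularity-L} (which yields $\|Q\|_{H^2}\le C\|\mathcal{L}(Q)\|_{L^2}$ on $H^2\cap H^1_0$), this dominates $\alpha\|y\|_\VWloka^2$; hence assumptions (a), (b) of Proposition \ref{Wlokaregularity} hold with $k_0=0$.

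Next, for $(f,g)\in Y_0$ one has $\mathcal{G}\in \SonetwoWloka(0,T;\VWloka')$ with $\|\mathcal{G}\|_{\VWloka'}\lesssim\|f\|_{\VNSprimeO}+\|g\|_{L^2(\O)}$; moreover $\mathcal{G}(0)\in\HWloka$, since $f(0)\in\HNSO$ and, because $\mathcal{L}\colon H^2\cap H^1_0\to L^2(\O;\mathcal{Q})$ is onto and $\int_\O g(0):\mathcal{L}(R)\,\ud x=a(g(0),R)$ for $g(0)\in H^1_0$, the $Q$-component of $\mathcal{G}(0)$ is represented by $g(0)\in H^1_0(\O)$. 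Thus the compatibility condition in Proposition \ref{Wlokaregularity} is precisely the defining condition of $Y_0$; with $y_0=0\in\VWloka$ it produces a unique $y=(u,Q)$ with $y\in \SonetwoWloka(0,T;\VWloka)$, $\p_t^2 y\in L^2(0,T;\VWloka')$, $y(0)=0$, solving the abstract equation. Testing with $z=(v,0)$ recovers the first line of $\mathscr{L}_{Q_0}(u,Q)=(f,g)$ in $\VNSprimeO$; testing with $z=(0,R)$, after integrating by parts in the $a$-term and again using surjectivity of $\mathcal{L}$, recovers the second line as an identity in $L^2(\O;\mathcal{Q})$. In particular $Q|_{\p\O}=0$ and $(u,Q)|_{t=0}=(0,0)$, so it remains only to upgrade to $X_0$. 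From $y\in\SonetwoWloka(0,T;\VWloka)$ we directly get $u\in\SonetwoWloka(0,T;\VNSO)$ and $Q\in\SonetwoWloka(0,T;H^2(\O))$; reading the $Q$-equation as $\dt Q=-\mathcal{L}(Q)+\mathcal{S}_{Q_0}(D(u))-Q_0 W(u)+W(u)Q_0+g$ and noting each term on the right lies in $\SonetwoWloka(0,T;L^2(\O))$ (using $Q_0\in L^\infty$) gives $Q\in H^2(0,T;L^2(\O))$; similarly the $u$-equation $\dt u=P_\sigma\div[\cdots]+f$ together with $\|\div G\|_{\VNSprimeO}\le\|G\|_{L^2}$ gives $u\in H^2(0,T;\VNSprimeO)$. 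Hence $(u,Q)\in X_0$.

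For the estimate \eqref{stronginverse}, run the standard energy identity for $\dt y+Ly=\mathcal{G}$: testing with $y$ and using the coercivity above (with $k_0=0$, so \emph{no} exponential-in-$T$ factor arises) yields
\[
\sup_{[0,T]}\Bigl(\|u\|_{L^2(\O)}^2+\|Q\|_{H^1(\O)}^2\Bigr)
+\int_0^T\Bigl(\|u\|_{H^1(\O)}^2+\|Q\|_{H^2(\O)}^2\Bigr)\ud t
\le C\int_0^T\Bigl(\|f\|_{\VNSprimeO}^2+\|g\|_{L^2(\O)}^2\Bigr)\ud t.
\]
Differentiating $\mathscr{L}_{Q_0}(u,Q)=(f,g)$ in $t$ (legitimate since $Q_0$ is time-independent) and applying the same estimate to $(\dt u,\dt Q)$, whose initial values are $\dt u(0)=f(0)$ and $\dt Q(0)=g(0)$ (because $u(0)=Q(0)=0$), controls all first-time-derivative norms by $\|f(0)\|_{\HNSO}^2+\|g(0)\|_{H^1(\O)}^2+\int_0^T(\|\dt f\|_{\VNSprimeO}^2+\|\dt g\|_{L^2(\O)}^2)\,\ud t\le C\|(f,g)\|_{Y_0}^2$; then $\|\p_t^2 u\|_{L^2(0,T;\VNSprimeO)}$ and $\|\p_t^2 Q\|_{L^2(0,T;L^2(\O))}$ are bounded through the differentiated equations by quantities already controlled, and for $T\le1$ the low-order pieces such as $\|u\|_{L^2(0,T;\VNSprimeO)}\le T^{1/2}\sup_{[0,T]}\|u\|_{\HNSO}$ are absorbed with $T$-independent constants. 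This gives \eqref{stronginverse} with $C_{\mathscr{L}}$ depending only on $\O$ and $\|Q_0\|_{H^2(\O)}$; uniqueness in $X_0$ follows from the uniqueness part of Proposition \ref{Wlokaregularity}, since any $X_0$-solution lies in the admissible class there. The main obstacle, and the crux of the argument, is exactly the choice of the $a(\cdot,\cdot)$-weighted inner product on $\HWloka$ (forced by the anisotropic third-order coupling, which cannot be handled by pairing the $Q$-equation with $R$ directly) and the exact cancellation of the stress terms in $\mathscr{A}(y,y)$ via \eqref{yuning:cal1}; once these are secured, coercivity is immediate from Corollary \ref{lem:regularity-L} and Korn's inequality, and the rest is bookkeeping with Proposition \ref{Wlokaregularity}.
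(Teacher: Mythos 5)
Your proposal is correct and takes essentially the same route as the paper: both recast the linear system in the Gelfand triple $\VNSO\times(H^2\cap H^1_0)\hookrightarrow \HNSO\times H^1_0\hookrightarrow \VNSprimeO\times L^2$ with the $Q$-slot paired against $\mathcal{L}(R)$ (equivalently, the $a(\cdot,\cdot)$-weighted inner product on $H^1_0$, which the paper encodes implicitly in \eqref{eq:1.12}), exploit the cancellation law \eqref{yuning:cal1} to obtain coercivity with $k_0=0$, and invoke Proposition~\ref{Wlokaregularity}. You are somewhat more explicit than the paper about the weighted pivot inner product, the check that $\mathcal{G}(0)\in\HWloka$ is exactly the defining compatibility condition of $Y_0$, and the $T$-uniform energy estimate the paper omits, and you also implicitly correct a typo in the paper's display for $a\bigl((u,Q),(v,P)\bigr)$, where the leading factor in the second integrand should read $\mathcal{L}(Q)$ rather than $\mathcal{L}(P)$.
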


\begin{proof}
 In order to apply Proposition \ref{Wlokaregularity}, we define the Hilbert spaces
\begin{align*}
 \HWloka  = \HWloka_1 \times \HWloka_2
& =\HNSO\times H^1_{0}( \Omega ;\mathcal{Q}) \,,\\
\VWloka   = \VWloka_1 \times \VWloka_2
& = \VNSO \times
 \( H^2( \Omega ;\mathcal{Q})\cap H^1_{0}( \Omega ;\mathcal{Q})  \),
\end{align*}
and equip them with standard product Sobolev norm. The dual spaces of
$\VWloka$ with respect to pivot space $\HWloka$ is $$\VWloka'=\VWloka_1'\times\VWloka_2'=H^{-1}_\sigma(\Omega)\times L^2(\Omega;\mathcal{Q}),$$
and the dual product is given  by
\begin{equation}\label{eq:1.12}
  \bscp{(u,Q)}{(\varphi,\Phi)}_{\VWloka',\VWloka}:=\bscp{u}{\varphi}_{\VWloka_1' \times\VWloka_1 }+\bscp{Q}{\Phi}_{\VWloka_2' \times\VWloka_2 }
  =\bscp{u}{\varphi}_{H^{-1}_\sigma,H^1_\sigma}+\int_{\Omega} Q:\mathcal{L}(\Phi)\dv{x},
\end{equation}
according to Lemma \ref{lem:regularity-L}. As a result, the space $Y_0$ defined by \eqref{Y0}  can be written by
\begin{equation*}
  Y_0=\{(f,g) \in H^1(0,T;\VWloka')\mid (f,g)|_{t=0}\in \HWloka\}.
\end{equation*}
We shall define  the bilinear form $a(\cdot,\cdot)$ on $\VWloka$ by
\begin{align*}
a((u,Q),(v,P))=& \int_{\Omega} \Big(D(u)+\mathcal{S}_{Q_0}(\mathcal{L}(Q))-Q_0\cdot\mathcal{L}(Q)+\mathcal{L}(Q) \cdot Q_0\Big):D v \dv{x}\\
&+\int_{\Omega} \Big(\mathcal{L}(P)-\mathcal{S}_{Q_0}(D(u))+Q_0 \cdot W(u)-W(u) \cdot Q_0\Big):\mathcal{L}(P)\dv{x}.
\end{align*}
 One can verify that this bilinear form satisfies the hypothesis  for applying Proposition \ref{Wlokaregularity}. Especially, coerciveness follows from cancellation law \eqref{yuning:cal1},
  \begin{align*}
a((u,Q),(u,Q))
=& \int_{\Omega} \Big(D(u)+\mathcal{S}_{Q_0}(\mathcal{L}(Q))-Q_0\cdot\mathcal{L}(Q)+\mathcal{L}(Q)\cdot Q_0\Big):D u \dv{x}\\
&+\int_{\Omega} \Big(\mathcal{L}(Q)-\mathcal{S}_{Q_0}(D(u))+Q_0 \cdot W(u)-W(u)\cdot Q_0\Big):\mathcal{L}(Q)\dv{x}\\
=&\int_{\Omega} (|D(u)|^2+|\mathcal{L}(Q)|^2)\dv{x}
\geq C\|(u,Q)\|^2_{\VWloka }.
\end{align*}
In the last step, we employed  Corollary \ref{lem:regularity-L}.
So  there exists a bounded linear operator $L:\VWloka\mapsto \VWloka'$ such that
$$\bscp{ L (u, Q)}{(\varphi,\Phi)}_{\VWloka',\VWloka}=a((u,Q),(\varphi,\Phi)).
$$
Moreover, for any $(f,g)\in Y_0$,
 the   abstract evolution equation
  \begin{align}\label{eq:AbstractEq}
 \bscp{(\dt{u},\dt{Q})}{(\varphi,\Phi)}_{\VWloka',\VWloka}
+\bscp{ L (u, Q)}{(\varphi,\Phi)}_{\VWloka',\VWloka}
=  \scp{(f,g)}{(\varphi, \Phi)}_{\VWloka',\VWloka}
\end{align}
 has a unique solution $(u,Q)$ satisfying
\begin{align*}
 (u,Q)\in \SonetwoWloka((0,T);\VWloka)\quad \text{ and }\quad
 (\p^2_t u,\p_t^2 Q)\in L^2\barg{(0,T);\VWloka^\prime}\,.
\end{align*}
or shortly $(u,Q)\in X_0$. Now we need to show that \eqref{eq:AbstractEq} is equivalent to \eqref{eq:1.10}: it is evident that,
 choosing $(\varphi,0) \in \VWloka$ in \eqref{eq:AbstractEq} implies the first equation in \eqref{eq:1.10}. To identify the equation for $Q$, we choose $(0,\Phi) \in \VWloka$ as
test function and deduce from \eqref{eq:1.12} that
\begin{align*}
&\int_{\Omega}  g(t,x):\mathcal{L}( \Phi(x))\dv{x}\\
 &=
\weight{\dt{Q},\Phi}_{\VWloka_2',\VWloka_2}
+\int_{\Omega} \Big(\mathcal{L}(Q)-\mathcal{S}_{Q_0}(D(u))+Q_0\cdot W(u)-W(u)\cdot  Q_0\Big):\mathcal{L}(\Phi)\dv{x}
\\ &=  \int_{\Omega}  \(\dt{Q} +\mathcal{L}(Q)-\mathcal{S}_{Q_0}(D(u))+Q_0 \cdot W(u)-W(u) \cdot Q_0\):\mathcal{L}(\Phi) \dv{x}\,.
\end{align*}
In view of Lemma \ref{lem:regularity-L},   $\mathcal{L}\colon \VWloka_2\mapsto L^2( \Omega ;\mathcal{Q})$ is bijective and thus
\begin{equation*}
   \dt{Q} +\mathcal{L}(Q)-\mathcal{S}_{Q_0}(D(u))+Q_0 \cdot W(u)-W(u) \cdot Q_0 = g, \qquad \text{a.e. in }  \Omega \times (0,T).
\end{equation*}

Altogether, we have proven that $\mathscr{L}_{Q_0}\colon X_0\mapsto Y_0$
is an isomorphism. Since $\mathscr{L}_{Q_0}$ is also a bounded linear operator and the operator norm only depends   on $Q_0$ and geometry of $\O$,  the boundedness of its inverse operator
$\mathscr{L}_{Q_0}^{-1}\colon Y_0\mapsto X_0$   follows from inverse mapping theorem. The assertion that   $C_{\mathscr{L}}$ is independent of $T$  follows from standard energy estimate  and the cancellation law \eqref{yuning:cal1}. Here we omit the details.
\end{proof}

\begin{prop}\label{nonlinearterms}
Fix $0<T\leq 1$, $R>0$, $(u_0, Q_0)\in \Zspace$.
Let $\mathscr{N}_{(u_0,Q_0)}$ be the nonlinear operator defined
in \eqref{newnon} and
$\BXzeroR =\{(v,P)\in X_0,\, \|(v,P)\|_{X_0}\leq R\}$.
Then the following assertions hold
for all $(u_i,Q_i)\in \BXzeroR$, $i=1,\,2$:

\begin{itemize}
 \item [(i)] $\mathscr{N}_{(u_0,Q_0)}$ maps $X_0$ to $Y_0$.

\item [(ii)] Local Lipschitz continuity: there exists a constant
$C_{\mathscr{N}}(T,R,Q_0,u_0)>0$ such that
\begin{align}\label{lip}
\begin{aligned}
&\|\mathscr{N}_{(u_0,Q_0)}(u_1,Q_1)-\mathscr{N}_{(u_0,Q_0)}(u_2,Q_2)\|_{Y_0} \\
&\,\qquad\qquad
\leq  C_{\mathscr{N}}(T,R,Q_0,u_0)\|(u_1-u_2,Q_1-Q_2)\|_{X_0}\,.
\end{aligned}
\end{align}

\item [(iii)] Local boundedness:
\begin{equation}\label{bounded}
\|\mathscr{N}_{(u_0,Q_0)}(u_1,Q_1)\|_{Y_0}
\leq C_{\mathscr{N}}(T,R,Q_0,u_0)\|(u_1,Q_1)\|_{X_0}+
\| \mathcal{E}(u_0, Q_0)\|_{Y_0}\,.
\end{equation}
where $\mathcal{E}$ is given by \eqref{stanon}.

\item [(iv)]
For any fixed $R>0$, $\lim_{T\to 0}C_{\mathscr{N}}(T,R,Q_0,u_0)= 0$.
\end{itemize}
\end{prop}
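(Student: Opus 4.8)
\textbf{Proof plan for Proposition \ref{nonlinearterms}.}
The strategy is the standard one for treating a semilinear parabolic problem via a contraction argument: establish that the nonlinear part $\mathscr{N}_{(u_0,Q_0)}$ is a locally Lipschitz, locally bounded map between the correct function spaces, with Lipschitz constant degenerating as $T\to 0$. Recall from \eqref{newnon} that $\mathscr{N}_{(u_0,Q_0)}(u_h,Q_h) = \tilde{\mathscr{N}}_{Q_0}(u_h+u_0,Q_h+Q_0) + \mathscr{S}_{Q_0}(u_0,Q_0)$, so all estimates reduce to estimating $\tilde{\mathscr{N}}_{Q_0}$ on the affine ball $(u_0,Q_0)+\BXzeroR$; the additive constant $\mathscr{S}_{Q_0}(u_0,Q_0)$ is fixed and, by the definition \eqref{init} of $\Zspace$, equals $-\mathcal{E}(u_0,Q_0)$ up to the linear part already absorbed in $\mathscr{L}_{Q_0}$, which explains the appearance of $\|\mathcal{E}(u_0,Q_0)\|_{Y_0}$ in \eqref{bounded}. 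The key structural point making the spaces match up is that every term in \eqref{yuning:nonlinear1} is either at most quadratic in $(u,Q)$ and their first derivatives, or involves the difference $Q-Q_0$ multiplied against $\mathcal{L}(Q)$ (so the genuinely second-order-in-$Q$ terms always carry the small factor $Q-Q_0$), so that no term requires more regularity than $X_0$ provides.

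First I would record the relevant embeddings for $X_u$ and $X_Q$. Since $u\in H^1(0,T;H^1_\sigma)\cap L^\infty(0,T;H^1_\sigma)$ and similarly $Q\in H^1(0,T;H^2)$, interpolation and Sobolev embedding in three dimensions give $u\in C([0,T];H^1)\hookrightarrow C([0,T];L^6)$, $\nabla u\in L^\infty L^2$, $Q\in C([0,T];H^2)\hookrightarrow C([0,T];W^{1,6}\cap L^\infty)$, and $\nabla Q\in L^\infty L^6$, with the norms controlled by $\|(u,Q)\|_{X_0}$; moreover the time-derivatives $\dt u\in L^2 H^1$, $\dt Q\in L^2 H^2$ are available since we work in $H^1(0,T;\cdot)$. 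These are exactly the norms needed to differentiate the products in \eqref{yuning:nonlinear1} once in time and still land in $H^1(0,T;H^{-1}_\sigma)\times H^1(0,T;L^2)$. Then, term by term: the transport terms $u\cdot\nabla Q$ and $u\otimes u$, and the bulk terms $\mathcal{J}(Q)$ (a polynomial of degree $\le 3$ in $Q$, with $\partial_t\mathcal{J}(Q)$ controlled by $\dt Q$ and $L^\infty$ bounds on $Q$), and the distortion stress $\tfrac{\partial\mathcal{F}}{\partial\nabla Q}:\nabla Q$ (quadratic in $\nabla Q$, with one factor in $L^\infty L^6$ and the other in $L^\infty L^6$, and its time derivative pairing $\dt(\nabla Q)\in L^2 L^6$ against $\nabla Q\in L^\infty L^6$) all map into the required space after applying $P_\sigma\div$, which is bounded $L^2\to H^{-1}_\sigma$. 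The terms $\mathcal{S}_Q(\mathcal{J}(Q))$, $Q\cdot\mathcal{J}(Q)$ are handled the same way. The delicate block is the second matrix in \eqref{yuning:nonlinear1}: here one uses that $\mathcal{S}_Q(\mathcal{L}(Q))-\mathcal{S}_{Q_0}(\mathcal{L}(Q))$ and $(Q-Q_0)\cdot\mathcal{L}(Q)$ are linear in $\mathcal{L}(Q)\in L^\infty L^2$ (since $Q\in L^\infty H^2$) but carry the factor $Q-Q_0\in C([0,T];H^2)\hookrightarrow C([0,T];L^\infty)$, and crucially $Q_h(0)=0$ so $\|Q-Q_0\|_{C([0,T];L^\infty)} = \|Q_h\|_{C([0,T];L^\infty)}\le C T^{1/2}\|\dt Q_h\|_{L^2 H^2}\le CT^{1/2}R$ — this is the mechanism producing a vanishing factor as $T\to 0$. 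Similarly $\mathcal{S}_Q(D(u))-\mathcal{S}_{Q_0}(D(u))$ and $(Q_0-Q)\cdot W(u)$ carry the factor $Q-Q_0$ against $D(u),W(u)\in L^\infty L^2$.

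For (ii), Lipschitz continuity, I would subtract the expressions for $(u_1,Q_1)$ and $(u_2,Q_2)$, write each difference of products as a telescoping sum $ab-a'b' = (a-a')b + a'(b-b')$, and estimate exactly as above, noting that all the "coefficient" factors stay bounded by $C(R,u_0,Q_0)$ on the ball and at least one factor in each term is a difference $u_1-u_2$ or $Q_1-Q_2$ (or derivatives thereof), controlled by $\|(u_1-u_2,Q_1-Q_2)\|_{X_0}$. For (iii) one takes $(u_2,Q_2)=(0,0)$ in the Lipschitz estimate and adds the fixed term $\|\mathscr{S}_{Q_0}(u_0,Q_0)\|_{Y_0}$, which by the discussion above is bounded by $\|\mathcal{E}(u_0,Q_0)\|_{Y_0}$ plus a term absorbed into $C_{\mathscr{N}}\|(u_1,Q_1)\|_{X_0}$ (using that the linearized part of $\mathcal{E}$ at $(u_0,Q_0)$ is already subtracted off). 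For (iv), the point is that \emph{every} term in $\tilde{\mathscr{N}}_{Q_0}-\tilde{\mathscr{N}}_{Q_0}|_{\text{linear part}}$ carries, after the telescoping, either an explicit factor $T^{1/2}$ coming from $\|Q_h\|_{C([0,T];L^\infty)}\le CT^{1/2}R$ or $\|u_h\|_{C([0,T];\cdot)}$, or from estimating an $L^2(0,T;\cdot)$ norm of a bounded-in-$L^\infty(0,T;\cdot)$ quantity (giving $T^{1/2}$), or from the quadratic/cubic structure evaluated on the ball of radius $R$ around a \emph{fixed} center which, after subtracting the value at $T=0$, gains smallness by continuity of $t\mapsto (u_h(t),Q_h(t))$ in $H^1\times H^2$ together with $(u_h,Q_h)(0)=0$. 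Collecting all these, $C_{\mathscr{N}}(T,R,Q_0,u_0)\le C(R,Q_0,u_0)\,T^{1/2}\to 0$ as $T\to 0$.

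\textbf{Main obstacle.} The principal difficulty is bookkeeping in the second block of \eqref{yuning:nonlinear1}: one must verify that the apparently dangerous combination $\mathcal{S}_Q(\mathcal{L}(Q))-\mathcal{S}_{Q_0}(\mathcal{L}(Q))-(Q-Q_0)\cdot\mathcal{L}(Q)+\mathcal{L}(Q)\cdot(Q-Q_0)$, which contains $\mathcal{L}(Q)$ (second derivatives of $Q$, only in $L^\infty L^2$) and whose time derivative contains $\mathcal{L}(\dt Q)\in L^2 L^2$, still lands in $H^1(0,T;H^{-1}_\sigma)$ after $P_\sigma\div$ — this works precisely because $\div$ is applied, so one integration by parts moves a derivative onto the factor $Q-Q_0$ (which sits in $L^\infty H^2\hookrightarrow L^\infty W^{1,6}$), while the $H^{-1}_\sigma$ target absorbs the remaining derivative; the same pairing must be checked for the time-differentiated version, where $\dt Q$ now plays the role of the "$L^2 H^2$" factor and $Q-Q_0$ stays in $L^\infty W^{1,\infty}$. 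Keeping track of which factor absorbs which derivative, and ensuring the small factor $Q-Q_0$ (hence the $T^{1/2}$) survives in every resulting term, is the crux; the rest is routine Hölder and Sobolev estimates.
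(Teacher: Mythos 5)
The paper does not actually prove Proposition \ref{nonlinearterms}: immediately after the statement it says the argument can be adapted line by line from Proposition 4.3 of Abels--Dolzmann--Liu \cite{AbelsDolzmannLiu2014}, so there is no in-paper proof to compare against. Your sketch is the standard route one expects to find in that reference, and the key structural observations are right: the second block of \eqref{yuning:nonlinear1} is the only place $\mathcal{L}(Q)$ (two space derivatives of $Q$) appears, and it always does so multiplied by the small factor $Q-Q_0$ or $\mathcal{S}_Q-\mathcal{S}_{Q_0}$; the identity $\mathscr{N}_{(u_0,Q_0)}(0,0)=\tilde{\mathscr{N}}_{Q_0}(u_0,Q_0)+\mathscr{S}_{Q_0}(u_0,Q_0)=\mathcal{E}(u_0,Q_0)$ correctly accounts for the additive term in (iii); and the bound $\norm{Q_h}_{C([0,T];H^2)}\leq T^{1/2}\norm{\dt{Q}_h}_{L^2(0,T;H^2)}\leq T^{1/2}R$, exploiting $Q_h(0)=0$, is indeed the principal source of smallness in (iv).

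Two caveats on the details. First, the uniform rate $C_{\mathscr{N}}(T,\cdot)\leq C\,T^{1/2}$ that you assert is too strong. Consider, for instance, a contribution like $\dt{u}_h\otimes u_0$ (or $\dt{Q}_h\,\mathcal{L}(Q)$) produced when you time-differentiate a product: $\dt{u}_h$ and $\dt{Q}_h$ live only in $L^2(0,T;\cdot)$, not in $L^\infty$ in time, so a plain Hölder estimate in $t$ extracts no power of $T$ from $\norm{\dt{u}_h}_{L^2(0,T;\cdot)}$. What one must do instead is interpolate the available bounds $\dt{u}_h\in C([0,T];L^2_\sigma)\cap L^2(0,T;H^1_{0,\sigma})$ (respectively $\dt{Q}_h\in C([0,T];H^1)\cap L^2(0,T;H^2)$) to place $\dt{u}_h\in L^p(0,T;L^q)$ with some $p>2$, and only then does Hölder in time give a small (but generally smaller than $1/2$) power of $T$. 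The third mechanism you list for (iv), smallness ``by continuity'' of $t\mapsto(u_h(t),Q_h(t))$ after subtracting the value at $T=0$, is too vague to substitute for this interpolation; but since the statement only asserts $\lim_{T\to 0}C_{\mathscr{N}}=0$, not a quantitative rate, the conclusion survives. Second, the ``integration by parts to move a derivative onto $Q-Q_0$'' that you single out as the main obstacle is not needed: the bracket $\mathcal{S}_Q(\mathcal{L}(Q))-\mathcal{S}_{Q_0}(\mathcal{L}(Q))-(Q-Q_0)\mathcal{L}(Q)+\mathcal{L}(Q)(Q-Q_0)$ already lies in $H^1(0,T;L^2(\Omega))$, because $Q-Q_0=Q_h\in C([0,T];L^\infty)$ pairs with $\mathcal{L}(Q)\in L^\infty(0,T;L^2)$, while for the time derivative $\dt{Q}\in L^2(0,T;H^2)\hookrightarrow L^2(0,T;L^\infty)$ pairs with $\mathcal{L}(Q)\in L^\infty(0,T;L^2)$ and $Q-Q_0\in C([0,T];L^\infty)$ pairs with $\mathcal{L}(\dt{Q})\in L^2(0,T;L^2)$; applying $P_\sigma\div$ then lands directly in $H^1(0,T;H^{-1}_\sigma)$ by the very definition of distributional divergence in Section \ref{sec:Notation}, with no redistribution of derivatives required.
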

The proof of this result can be adapted line by line  from \cite[Proposition 4.3]{AbelsDolzmannLiu2014}. Actually, the proof in \cite{AbelsDolzmannLiu2014} is slightly more general since they work with variable viscosity in the fluid equation and   mixed boundary condition for the $Q$-tensor field.

\begin{proof} [Proof of Theorem~\ref{thm:localstrong}]
The proof will be divided into two steps. First, we shall use Proposition \ref{linearizedprop} and \ref{nonlinearterms}
to prove the existence and uniqueness of a regular in time solution.  Based on this, in the second step, owning to a   special  structure
of the system, we improve the spatial regularity of $u$ and also $Q$ and this leads to the strong solution of  \eqref{yuning:be}.
\medskip

\textit{\underline{Step 1: Regularity in time.}}

We first show that
\begin{equation*}
  \mathscr{A} := \mathscr{L}^{-1}_{Q_0}\mathscr{N}_{(u_0,Q_0)}:X_0\mapsto X_0
\end{equation*}
has a unique fixed-point. By~\bref{stronginverse} and \eqref{lip} we find for all
$(\uhi,\Qhi)\in \BXzeroR$ that
\begin{align*}
&\bnorm{ \mathscr{L}_{Q_0}^{-1}\mathscr{N}_{(u_0,Q_0)} (\uhone,\Qhone)
- \mathscr{L}^{-1}_{Q_0}\mathscr{N}_{(u_0,Q_0)}(\uhtwo,\Qhtwo) }_{X_0}
\\ &\, \qquad
  \leq  C_{\mathscr{L}}\|\mathscr{N}_{(u_0,Q_0)}
  (\uhone,\Qhone)-\mathscr{N}_{(u_0,Q_0)}(\uhtwo,\Qhtwo)\|_{Y_0}
\\ &\, \qquad
  \leq  C_{\mathscr{L}}C_{\mathscr{N} }(T,R,u_0,Q_0)
  \|(\uhone-\uhtwo,\Qhone-\Qhtwo)\|_{X_0}\,.
\end{align*}
Therefore $\mathscr{A} $ is a contraction mapping for $T\ll 1$.
A similar argument shows  that $\mathscr{A}$ maps $\BXzeroR$ into itself. In fact,
by \eqref{bounded}, we deduce that
\begin{align*}
\bnorm{ \mathscr{A} (\uhone, \Qhone) }_{X_0}
&\leq C_{\mathscr{L}} \bnorm{ \mathscr{N}_{(u_0,Q_0)}(\uhone,\Qhone) }_{Y_0}\\
&\leq C_{\mathscr{L}}\barg{ C_{\mathscr{N}}(T,R,u_0,Q_0)\|(\uhone,\Qhone)\|_{X_0}+
\|\mathcal{E}(u_0,Q_0)\|_{Y_0}}.
\end{align*}
So we can  fix $R \gg 1$ large enough
and then choose $T \ll 1$ small enough in such a way that
\begin{align*}
      \bnorm{ \mathscr{A}(\uhone,\Qhone) }_{X_0}
&\leq C_{\mathscr{L}}
C_{\mathscr{N}}(T,R,u_0,Q_0)\|(\uhone,\Qhone)\|_{X_0} + \frac R2 \leq R\,.
    \end{align*}
We conclude from Banach's fixed-point theorem that $\mathscr{A}$ possess a
unique fixed-point $(\uh,\Qh)\in X_0$ and it is
a  solution of the system \eqref{yuning:be}, according to \eqref{eq:1.06} and   Proposition \ref{lem5}.

The argument implies the uniqueness as well. Suppose that
there was another solution $(\hat{u}_h,\hat{Q}_h)$ in
$\BXzeroRone$ with $R_1>R$. Choose  $\hat{T}\leq T$ and repeat the above argument
to show the uniqueness of fixed-points of $\mathscr{A}$, which implies $(\uh,\Qh)=(\hat{u}_h,\hat{Q}_h)$
on $(0,\hat{T})\times\O$. Then the uniqueness follows by the continuity argument.

So $(u,Q)=(u_h,Q_h)+(u_0,Q_0)$ is a solution of \eqref{yuning:be} with
\begin{equation}\label{yuning:timereg}
  \begin{split}
    u&\, \in H^2(0,T;\VNSprimeO)\cap H^1(0,T;\VNSO)\,,\\
Q&\, \in H^2(0,T;L^2(\Omega;\mathcal{Q}))\cap H^1(0,T;H^2(\O;\mathcal{Q}))\,.
  \end{split}
\end{equation}
and it follows from standard interpolation result that
\begin{equation}\label{eq:1.25}
  u\in C([0,T];H^1_{0,\sigma}(\O)),\quad Q\in C([0,T];H^2(\O;\mathcal{Q}))
\end{equation}
and
\begin{equation}\label{eq:1.22}
  \begin{split}
    u_t  \in C([0,T];L^2_\sigma(\O)),\quad Q_t \in C([0,T];H^1(\Omega;\mathcal{Q})).
  \end{split}
\end{equation}
These also imply, together with Sobolev embedding that
\begin{equation}\label{eq:1.29}
  \begin{split}
    Q_t+u\cdot \nabla Q\in C([0,T];W^{1,3/2}(\O)).
  \end{split}
\end{equation}

\medskip

\textit{\underline{Step 2: Spatial regularity.}}

For any vector field $v\in H^1(\O;\R^{3})$,  we denote the symmetric matrix
\begin{equation*}
\mathcal{T}( Q ,\nabla v):=\mathcal{S}_Q ( D(v))- Q \cdot W(v)+  W(v)\cdot  Q,
\end{equation*}
where the operator $\mathcal{S}_Q$ is defined by \eqref{eq:1.15}. We also denote
\begin{equation*}
   \sigma( Q ,\nabla v):=  \mathcal{S}_Q (\mathcal{T}( Q ,\nabla v))- Q \cdot \mathcal{T}( Q ,\nabla v)+\mathcal{T}( Q ,\nabla v)\cdot Q .
\end{equation*}
Note that, for any vector field $v\in H^1(\O;\R^{3})$, not necessarily divergence-free, $\mathcal{T}( Q ,\nabla v)$ is traceless and symmetric according to \eqref{eq:1.15}.
Then we have from $Q$-tensor equation \eqref{eq:BE-Q} that
\begin{equation*}
 \mathcal{H}(Q) =-  \mathcal{T}( Q , \nabla u)+ (\partial_t Q + u\cdot\nabla Q ).
\end{equation*}
In addition, if we define
 \begin{equation}\label{eq:1.23}
  f:=-\mathcal{S}_Q(Q_t+u\cdot\nabla Q)+Q\cdot(Q_t+u\cdot\nabla Q)-(Q_t+u\cdot\nabla Q)\cdot Q,
\end{equation}
then the following identity holds:
\begin{equation*}
\begin{split}
  & D(u)+ \sigma( Q ,\nabla u)+f\\
  =& D(u)+\mathcal{S}_Q(\mathcal{T}(Q,\nabla u)-(Q_t+u\cdot\nabla Q))\\
  &+Q\cdot(Q_t+u\cdot\nabla Q-\mathcal{T}(Q,\nabla u))-(Q_t+u\cdot\nabla Q-\mathcal{T}(Q,\nabla u))\cdot Q\\
  =& D(u)-\mathcal{S}_Q(\mathcal{H}(Q))+Q\cdot \mathcal{H}(Q)-\mathcal{H}(Q)\cdot Q\\
  =&\sigma^s+\sigma^a,
\end{split}
\end{equation*}
 or equivalently
\begin{equation}\label{eq:stokes}
D(u)+ \sigma( Q ,\nabla u)+f
  =\sigma^s+\sigma^a.
\end{equation}
Substituting \eqref{eq:stokes} into \eqref{eq:BE-v} leads to
\begin{equation}\label{eq:stokes1}
  \nabla \cdot (  D(u)+\sigma(Q,\nabla u))+\nabla P=-\nabla\cdot (f+\sigma^d)+u\cdot\nabla u+u_t.
\end{equation}
If we denote
\begin{equation*}
\tilde{f}:= -\nabla\cdot (f+\sigma^d) +u\cdot\nabla u,
\end{equation*}
where $f$ is defined by \eqref{eq:1.23}, then   due to the regularity result \eqref{yuning:timereg}, we can show that
\begin{equation}\label{eq:1.33}
\tilde{f}\in C([0,T];L^{3/2}(\Omega)).
\end{equation}
Actually,  it follows from \eqref{eq:1.29}, \eqref{eq:1.15} and Sobolev embedding that
 \begin{equation}\label{eq:1.30}
   \mathcal{S}_Q(Q_t+u\cdot\nabla Q)\in C([0,T];W^{1,3/2}(\O)\cap L^3(\O))
 \end{equation}
 and also $f\in C([0,T];W^{1,3/2}(\O))$. These together with \eqref{eq:1.25} imply \eqref{eq:1.33}.

The crucial observation is that, \eqref{eq:stokes1} is  a Stokes system with variable coefficient. To show this, we claim that, the bilinear form
\begin{equation*}
 a(u,v):= \langle  D(u)+  \sigma(Q,\nabla u),\nabla v\rangle
\end{equation*}
defines a symmetric positive definite bilinear form on $H^1(\Omega;\R^3)$. Actually, note that $\mathcal{S}_Q (\mathcal{T})$ is symmetric while $ Q \cdot \mathcal{T}-\mathcal{T}\cdot Q $ is antisymmetric, we infer  from \eqref{yuning:cal1} that
\begin{equation*}
\begin{split}
\sigma( Q ,\nabla v):\nabla u &= \Big( \mathcal{S}_Q (\mathcal{T}( Q ,\nabla v))
- Q \cdot \mathcal{T}( Q ,\nabla v)+\mathcal{T}( Q ,\nabla v)\cdot Q \Big):\nabla u \\\nonumber
&=\Big( \mathcal{S}_Q(D(u))- Q \cdot W(u)+W(u)\cdot Q \Big): \mathcal{T}(Q,\nabla v) \\
&=  \mathcal{T}(Q,\nabla u) : \mathcal{T}(Q,\nabla v).
\end{split}
\end{equation*}
This formula together with the definition of $\mathcal{T}( Q ,\nabla v)$  implies that, there exists a smooth tensor-valued function
$$\left\{\hat{A}^{k\ell }_{ij}(z): \mathcal{Q}\mapsto \R\right\}_{1\leq i,j,k,\ell\leq 3}$$
with
\begin{equation*}
  \xi_i^k\hat{A}^{k\ell}_{ij}(z)\xi_j^\ell\geq 0,~\forall \xi\in \R^{3\times 3},\quad \forall z\in\mathcal{Q}
\end{equation*}
such that the following identity holds almost everywhere for $(x,t)\in\O_T$:
\begin{equation*}
  \sigma(Q(x,t),\nabla u):\nabla v = \p_k u_i\hat{A}^{k\ell}_{ij}(Q(x,t))\p_\ell v_j,\quad\forall u,v\in H^1(\O;\R^3).
\end{equation*}
Consequently, the system \eqref{eq:stokes1} can be reduced to
\begin{equation*}
  \left\{
  \begin{array}{rl}
     \pa_i \((\delta_{i}^j\delta_{k}^\ell+\hat{A}^{k\ell}_{ij}(Q(x,t)))\pa_j u_\ell\)+\p_k P&=\tilde{f}_k+\p_t u_k\\
    \nabla\cdot u&=0,\\
    u|_{\p\O}&=0.
  \end{array}
  \right.
\end{equation*}
Then it follows from Corollary \ref{lem2} as well as \eqref{eq:1.25} that\begin{equation*}
  \|u\|_{W^{2,3/2}(\Omega)}+\|\nabla P\|_{L^{3/2}(\Omega)}\leq C \(1+\|Q\|_{C([0,T];H^2(\O))}\)\|\tilde{f}+\p_t u\|_{L^{3/2}(\Omega)}, ~a.e.~t\in [0,T],
\end{equation*}
and this yields the second order derivative estimate for the velocity field:
\begin{equation}\label{eq:1.16}
  \begin{split}
  &\|u\|_{L^\infty(0,T;W^{2,3/2}(\Omega))} +\|\nabla P\|_{L^\infty(0,T;L^{3/2}(\O))}\\
  \leq &C \(1+\|Q\|_{C([0,T];H^2(\O))}\)\|\tilde{f}+\p_t u\|_{C([0,T];L^{3/2}(\Omega))}.
  \end{split}
\end{equation}
On the other hand,   we can write \eqref{eq:BE-Q}  as
\begin{align}\nonumber\label{eq:1.28}
  Q_t-\mathcal{L}(Q)&=
 -\mathcal{J}(Q)-u\cdot\nabla  Q  - Q \cdot W(u)+W(u)\cdot Q +\mathcal{S}_Q (D(u))
\\ &=: \mathcal{N}(u,\nabla u, Q, \nabla Q).
\end{align}
We claim that
\begin{equation}\label{eq:1.34}
  \mathcal{N}(u,\nabla u,Q,\nabla Q)\in L^\infty(0,T;W^{1,3/2}(\O)).
\end{equation}
Indeed,
it follows from  \eqref{eq:1.16} that $\nabla u\in L^\infty(0,T;W^{1,3/2}\cap L^3(\O))$ and this together with \eqref{eq:1.25},
Sobolev embedding and H\"{o}lder's inequality implies \eqref{eq:1.34}. Consequently, we can apply
Lemma \ref{lem:estimate-L} to deduce the higher order regularity of  $Q$:
\begin{equation}\label{eq:1.17}
\begin{split}
&\|Q\|_{L^\infty(0,T;W^{3,3/2}(\O))}\\
\leq &C \(\|\mathcal{N}(u,\nabla u,Q,\nabla Q)\|_{L^\infty(0,T;W^{1,3/2}(\O))}+\|Q_t\|_{L^\infty(0,T;W^{1,3/2}(\O))}+\|Q\|_{W^{3-2/3,3/2}(\p\O)}\).
\end{split}
\end{equation}

\medskip

 Combining  \eqref{eq:1.16}, \eqref{eq:1.17} and \eqref{yuning:timereg}, one can verify as previously that \eqref{eq:1.29}  can be improved to be
 \begin{equation*}
    Q_t+u\cdot \nabla Q\in L^\infty( 0,T ;H^{1}(\O)),
\end{equation*}
and this will in turn improve \eqref{eq:1.30} to be
 \begin{equation*}
   \mathcal{S}_Q(Q_t+u\cdot\nabla Q)\in L^\infty( 0,T ;H^1(\O)).
 \end{equation*}
 So we end up with an improved estimate    $\tilde{f}\in L^{\infty}( 0,T ;L^2(\O))$, in contrast to \eqref{eq:1.33}.
So one can argue   in the same manner as in the previous step by employing the second part of Corollary \ref{lem2}:
\begin{equation*}
  \|u\|_{L^{\infty}(0,T;H^{2}(\Omega))}   \leq C \|(\tilde{f},u_t)\|_{L^{\infty}(0,T;L^{2}(\Omega))}.
  \end{equation*}
  This implies $\mathcal{S}_Q(D(u))\in L^\infty(0,T;H^1(\O))$ and thus $ \mathcal{N}(u,\nabla u,Q,\nabla Q)\in L^\infty(0,T;H^1(\O))$. Together with \eqref{eq:1.22} and the boundary condition $Q|_{\p\O}=Q_0|_{\p\O}\in H^{5/2}(\p\O)$,  we can employ Lemma \ref{lem:estimate-L} to enhance the regularity of   \eqref{eq:1.28}  by
$Q\in L^\infty(0,T;H^3(\O))$. This completes the proof of the main theorem.
\end{proof}

\section*{Acknowledgment}
Y. Liu is supported by the Affiliated Faculty Research Grant of NYU shanghai.
W. Wang is supported by NSF of China under Grant 11501502 and ``the Fundamental Research Funds for the Central Universities" 2016QNA3004.

\end{document}